\newcommand{\Ia }{\mathbb I}
\newcommand{\Xa}{\mathbb{X}}
\newcommand{\Ya}{\mathbb{Y}}
\newcommand{\Za}{\mathbb{Z}}
\newcommand{\Ce }{\mathcal C}
\newcommand{\Xe }{\mathcal X}
\renewcommand{\int}{\operatorname{Int}}
\newlength{\dlugosc}
\newlength{\kolko}
\newtheorem{thm}{Theorem}%[section]
\newtheorem{pro}[thm]{Proposition}
\newtheorem{lem}[thm]{Lemma}
\newtheorem{cor}[thm]{Corollary}
\title{On the center of distances}
\subjclass[2000]{Primary: 54E35; Secondary: 05B10, 11B13.}
\keywords{Cantorval, Center of distances.}
\thanks{The first and the third author acknowledge support from GA\v{C}R project 16-34860L and RVO: 67985840.}
\author{Wojciech Bielas}
\address{Institute of Mathematics, University of Silesia, Bankowa 14, 40-007 Katowice, Poland; Institute of Mathematics of the Czech Academy of Sciences, \v{Z}itn\'a 25, 115 67 Praha 1, Czech Republic}
\email{wojciech.bielas@us.edu.pl}
\author{Szymon Plewik}
\address{Institute of Mathematics, University of Silesia, Bankowa 14, 40-007 Katowice}
\email{plewik@math.us.edu.pl}
\author{Marta Walczy\'nska}
\address{Institute of Mathematics, University of Silesia, Bankowa 14, 40-007 Katowice, Poland; Institute of Mathematics of the Czech Academy of Sciences, \v{Z}itn\'a 25, 115 67 Praha 1, Czech Republic}
\email{marta.walczynska@us.edu.pl}
\begin{document}

\begin{abstract}
In this paper we introduce the notion of the center of distances of a metric space, which is required for a generalization of the theorem by J. von Neumann about permutations of two sequences with the same set of cluster points in a compact metric space.
Also, the introduced notion is used to study sets of subsums of some sequences of positive reals, as well for some impossibility proofs. We compute the center of distances of the Cantorval, which is the set of subsums of the sequence $\frac34, \frac12, \frac3{16}, \frac18, \ldots , \frac3{4^n}, \frac2{4^n}, \ldots$, and also for some related subsets of the reals.
\end{abstract}

\maketitle

\section{Introduction}\label{s1}

The center of distances seems to be an elementary and natural notion which, as we think, has not been studied in the literature.
It is an intuitive and natural concept which allows us to prove a generalization of the theorem about permutations of two sequences with the same set of cluster points in a compact metric space, see Theorem \ref{p1}.
We have realized that the computation of the center of distances---even for well-known examples---is not an easy task because it requires skillful use of fractions.
We have only found a few algorithms which enable computing centers of distances, see Lemmas \ref{lsk} and \ref{l7}.
So, we present the use of this notion for impossibility proofs, i.e., to show that a given set cannot be the set of subsums, for example Corollary \ref{c14}.

We refer the readers to the paper \cite{ni}, as {it is} a good introduction to facts about the set of subsums of a given sequence, cf. also papers \cite{bg} and \cite{jo} as well as others cited therein.
In several papers, the set of all subsums of the sequence $\frac34, \frac12, \frac3{16}, \frac18, \ldots , \frac3{4^n}, \frac2{4^n}, \ldots$, i.e., the set $\Xa$ consisting of all sums
\[\textstyle
\sum_{n\in A}\frac2{4^n} + \sum_{n\in B} \frac3{4^n},
\]
where $A$ and $B$ are arbitrary subsets of positive natural numbers, is considered.
J. A. Guthrie and J. E. Nymann, see \cite{gn} and compare \cite{ns} and \cite[p. 865]{ni}, have shown that $[\frac34, 1]\subset \Xa$.
But, as it can be seen in Corollary \ref{c6}, we get $ [\frac23,1] \subset \Xa$.
For these reasons, we felt that the arithmetical properties of $ \Xa$ are not well known and described in the literature.
Results concerning these properties are discussed in: Propositions \ref{p2}, \ref{p3} and \ref{p4}; Corollary \ref{c9}; Theorems \ref{t12}, \ref{12}, \ref{13}, \ref{t11} and \ref{14}; and also they are presented in Figures \ref{fig:an_approximation_of_the_Cantorval}, \ref{fig:the_arrangement_of_affine_copies_of_D} and \ref{fig:the_correspondence_of_gaps}.

\section{A generalization of a theorem by J. von Neumann} \label{s2}

Given a metric space $X$ with the distance $d$, consider the set
\[\textstyle
S(X) = \{\alpha\colon \forall_{x \in X} \exists_{y \in X} d(x,y)=\alpha \},
\]
which will be called the \textit{center of distances} of $X$.
Suppose that sequences $\{ x_n\colon n\in \omega\} $ and $\{ y_n\colon n\in \omega\} $ have the
same set of cluster points $C$.
For this, J. von Neumann \cite{ne} proved that there exists a permutation $\pi \colon \omega \to \omega $ such that
$\lim_{n\to +\infty} d(x_n, y_{\pi(n)}) =0$.
Proofs of the above statement can be found in \cite{ha} and \cite{yo}.
However, we would like to present a slight generalization of this result.

\begin{thm} \label{p1}
Suppose that sequences $\{a_n\}_{n \in \omega} $ and $\{b_n\}_{n \in \omega} $ have the
same set of cluster points $ C \subseteq X$, where $(X,d)$ is a compact metric space. If $\alpha \in S(C)$, then there exists a permutation $\pi \colon \omega \to \omega $ such that
$\lim_{n\to +\infty} d(a_n, b_{\pi(n)}) =\alpha$.
\end{thm}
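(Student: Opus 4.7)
The plan is to adapt the standard back-and-forth construction that proves von Neumann's theorem, replacing the conclusion $d(a_n,b_{\pi(n)})\to 0$ by $d(a_n,b_{\pi(n)})\to\alpha$. The hypothesis $\alpha\in S(C)$ is precisely what makes the shift possible: from any cluster point one can locate a companion in $C$ at distance exactly $\alpha$, so the shift by $\alpha$ can be realised inside the space of cluster points.

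I would begin with two preparatory observations. First, $C$ is closed in the compact space $X$ and therefore compact itself. Second, $d(a_n,C)\to 0$ and $d(b_n,C)\to 0$, for otherwise some subsequence would, by compactness of $X$, converge to a cluster point lying outside $C$. The main technical lemma then reads: for every $x\in C$ and every $\varepsilon>0$ there are infinitely many indices $m$ with $|d(x,b_m)-\alpha|<\varepsilon$, and symmetrically for $\{a_n\}$. This is the step where $\alpha\in S(C)$ enters: choose $y\in C$ with $d(x,y)=\alpha$, notice that $y$ is a cluster point of $\{b_m\}$, and apply the triangle inequality to the infinitely many $b_m$ close to $y$.

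Armed with the lemma I would build $\pi$ by back-and-forth. At odd stage $k$ take $n^{\star}$ to be the smallest index outside $\operatorname{dom}(\pi)$, let $c\in C$ be a point of $C$ nearest to $a_{n^{\star}}$, and use the lemma to pick an $m\notin\operatorname{range}(\pi)$ with $|d(c,b_m)-\alpha|<1/k$; the triangle inequality then gives $|d(a_{n^{\star}},b_m)-\alpha|<d(a_{n^{\star}},C)+1/k$. At even stages reverse the roles of the two sequences and of the two indices. Standard bookkeeping makes $\pi$ a bijection of $\omega$, and since $\pi$ is a permutation we have $\pi(n)\to\infty$ as $n\to\infty$, so both $d(a_n,C)$ and $d(b_{\pi(n)},C)$ tend to $0$; combined with $k(n)\to\infty$ (because at most $k$ pairs are matched by stage $k$), this makes the error bound above tend to $0$, yielding $d(a_n,b_{\pi(n)})\to\alpha$.

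The main obstacle is really just the packaging: choosing the tolerance at each stage so that a match is always available, and then combining the three vanishing contributions $d(a_n,C)$, $d(b_{\pi(n)},C)$ and $1/k(n)$ in the final limit. The case $\alpha=0$ collapses to the classical theorem of von Neumann and offers a useful sanity check.
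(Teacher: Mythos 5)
Your proposal is correct and follows essentially the same back-and-forth strategy as the paper: project the current term onto the compact set $C$, use $\alpha\in S(C)$ to shift by exactly $\alpha$ inside $C$, and then match with a nearby, not yet used term of the other sequence at a tolerance shrinking to zero. The only difference is presentational---you isolate as an explicit lemma the availability of infinitely many candidates at each stage, which the paper leaves implicit in the phrase ``the first element not already used.''
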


\begin{proof}
We use the back-and-forth method, which was developed in \cite[p. 35--36]{hu}.
Given $\alpha \in S(C)$, we shall renumber $\{b_n\}_{n \in \omega}$ by establishing a permutation $\pi\colon \omega \to \omega$ such that \[\textstyle \lim_{n\to +\infty} d(a_n, b_{\pi(n)}) =\alpha .\]
Put $\pi (0)= 0$ and assume that values $\pi (0), \pi (1), \ldots, \pi (m-1)$ and inverse values $\pi^{-1} (0), \pi^{-1} (1), \ldots, \pi^{-1} (m-1)$ are already defined. We proceed step by step as follows.
 
If $\pi (m)$ is not defined, then take points $ x_m, y_m \in C$ such that $d(a_m, x_m)= d(a_m, C) $ and $d(x_m, y_m)=\alpha$.
Then choose $b_{\pi(m)}$ to be the first element of $\{b_n\}_{n \in \omega}$ not already used such that $d(y_m, b_{\pi(m)})< \frac{1}{m}$.
 
But, if $\pi^{-1} (m)$ is not defined, then take points $ p_m, q_m \in C$ such that $d(b_m, q_m) = d(b_m, C) $ and $d(p_m, q_m)=\alpha$.
Choose $a_{\pi^{-1}(m)}$ to be the first element of $\{a_n\}_{ n \in \omega}$ not already used such that $d(p_m, a_{\pi^{-1}(m)})< \frac{1}{m}$.
 
The set $C\subseteq X$, as a closed subset of a compact metric space, is compact.
Hence required points $x_m$, $y_m$, $p_m$ and $q_m$ always exist and also
\[\textstyle
\lim_{n\to +\infty} d(a_n, C)= 0 = \lim_{n\to +\infty} d(b_n, C).
\]
It follows that $ \alpha = d(x_m, y_m) = d(p_m, q_m) = \lim_{n\to +\infty} d(a_n, b_{\pi(n)})$.
\end{proof}

As we have seen, the notion of the center of distances appears in a natural way in the context of metric spaces.
Though the computation of the center of distances is not an easy task, it can be done for important examples giving further information about those objects.

\section{On the center of distances and the set of subsums}

Given a metric space $X$, observe that $0\in S(X)$ and also, if $X\subseteq [0, +\infty)$ and $0\in X$, then $S(X) \subseteq X$.

If $\{a_n\colon n\in\omega\}$ is a sequence of reals, then the set
\[\textstyle
X=\{\sum_{n\in A} a_n\colon A \subseteq \omega \}
\]
is called the \textit{set of subsums} of $\{a_n\}$.
In this case, we have $d(x,y)=|x-y|$.
If $X$ is a subset of the reals, then any maximal interval $(\alpha, \beta)$ disjoint from $X$ is called an $X$-\textit{gap}.
Additionally, when $X$ is a closed set, then any maximal interval $[\alpha,\beta]$ included in $X$ is called an $X$-\textit{interval}.

\begin{pro}\label{I}
If $X$ is the set of subsums of a sequence $\{a_n\}_{n\in\omega}$, then $a_n\in S(X)$, for all $n\in\omega$.
\end{pro}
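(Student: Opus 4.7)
The plan is essentially a one-line argument, so I will describe it briefly and note where the only subtlety lies. By definition of $S(X)$, I need to show that for every fixed $n \in \omega$ and every $x \in X$, there exists some $y \in X$ with $|x - y| = a_n$. Since $x$ lies in the set of subsums, I can write $x = \sum_{k \in A} a_k$ for some $A \subseteq \omega$, and this representation is what I will manipulate.

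I would then split into two cases according to whether $n$ belongs to $A$. If $n \in A$, set $B = A \setminus \{n\}$, so that $y = \sum_{k \in B} a_k \in X$ and $x - y = a_n$. If $n \notin A$, set $B = A \cup \{n\}$, so that $y = \sum_{k \in B} a_k \in X$ and $y - x = a_n$. Either way, $|x-y| = a_n$, and since $x$ was arbitrary, this shows $a_n \in S(X)$.

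The only genuine issue is a definitional one: $S(X)$ is a set of distances, hence a subset of $[0, +\infty)$, so the statement implicitly requires $a_n \geq 0$ (consistent with the context of the paper, which focuses on sequences of positive reals). I do not foresee any real obstacle; the proof is a direct application of the two operations "remove an index from $A$" and "add an index to $A$," both of which keep the resulting sum inside $X$ by definition of the set of subsums.
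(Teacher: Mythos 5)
Your argument is correct and is essentially identical to the paper's own proof: both split into the cases $n\in A$ and $n\notin A$ and pass to $x-a_n$ or $x+a_n$ respectively. Your side remark about nonnegativity of the terms is consistent with the paper's standing assumption of sequences of positive reals.
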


\begin{proof}
Suppose $x=\sum_{n \in A} a_n \in X$. If $n\in A$, then $x-a_n\in X$ and $d(x, x-a_n)= a_n$.
When $n\notin A$, then $x+a_n\in X$ and $d(x, x+a_n)= a_n$.
\end{proof}

In some cases, the center of distances of the set of subsums of a given sequence can be determined. For example, the unit interval is the set of subsums of $\{ \frac1{2^n} \}_{n>0} $. So, the center of distances of $\{ \frac1{2^n} \}_{n>0} $ is equal to $[0,\frac12 ]$.

\begin{lem}\label{lsk}
Assume that $(\lambda\cdot [0,b))\cap X=\lambda\cdot X$, for a number $\lambda>0$ and a set $X\subseteq [0,b)$.
If $x\in [0,b)\setminus X$ and $n \in \omega$, then $\lambda^n x\notin X$.
\end{lem}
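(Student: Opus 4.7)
The natural approach is induction on $n \in \omega$. The base case $n=0$ is immediate, since $\lambda^{0}x = x \notin X$ by hypothesis. For the inductive step, I would assume $\lambda^n x \notin X$ and argue by contradiction: suppose $\lambda^{n+1}x \in X$.

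The plan is to show $\lambda^{n+1}x \in [0, \lambda b)$, so that the assumed identity $(\lambda \cdot [0,b)) \cap X = \lambda \cdot X$ forces $\lambda^{n+1}x \in \lambda \cdot X$. This yields some $y \in X$ with $\lambda^{n+1}x = \lambda y$, and since $\lambda > 0$ we may divide to conclude $y = \lambda^n x \in X$, contradicting the inductive hypothesis.

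The only delicate point, and where I expect the actual work to lie, is verifying $\lambda^{n+1}x < \lambda b$. From $\lambda^{n+1}x \in X \subseteq [0,b)$ we already know $\lambda^{n+1}x < b$. I would then split on the size of $\lambda$: if $\lambda \geq 1$, then $\lambda b \geq b > \lambda^{n+1}x$; if $\lambda < 1$, then $\lambda^n x \leq x < b$, so multiplying by $\lambda$ gives $\lambda^{n+1}x < \lambda b$. Either way the needed inclusion holds, the hypothesis applies, and the induction closes.

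I do not expect a genuine obstacle here: the statement is essentially the observation that the identity $(\lambda \cdot [0,b)) \cap X = \lambda \cdot X$ implies that any element of $X$ below $\lambda b$ has a $\lambda$-preimage in $X$, and the iterated conclusion follows by feeding this back into itself. The only care required is the elementary case split on $\lambda$ described above.
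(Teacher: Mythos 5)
Your proof is correct and follows essentially the same route as the paper's: induction on $n$, using the identity $(\lambda\cdot[0,b))\cap X=\lambda\cdot X$ to transfer non-membership from $\lambda^{n}x$ to $\lambda^{n+1}x$, the only work being the verification that $\lambda^{n+1}x\in\lambda\cdot[0,b)$. The one cosmetic difference is that the paper disposes of $\lambda>1$ once at the outset (noting that otherwise some $\lambda^{m}t$ with $t\in X$ would escape $[0,b)$), while you absorb that case into your inline split on the size of $\lambda$; both are sound.
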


\begin{proof}
Without loss of generality, assume that $X\cap(0,b)\not=\emptyset$.
Thus $\lambda \leqslant 1$, since otherwise we would get $ b < \lambda^m t \in X$, for some $t\in X$ and $m\in \omega$.
Obviously $x= \lambda^0x \in [0,b)\setminus X$. Assume that $\lambda^nx\notin X$, so we get
\[\textstyle
\lambda\cdot[0,b) \ni \lambda^{n+1}x=\lambda\cdot \lambda^nx\notin \lambda \cdot X=(\lambda\cdot [0,b))\cap X.
\]
Therefore $\lambda^{n+1}x\notin X$, which completes the induction step.
\end{proof}

\begin{thm}\label{2I}
If $q>2$ and $a\geqslant 0$, then the center of distances of the set of subsums of a geometric sequence $\{\frac{a}{q^n}\}_{n>0}$ consists of exactly the terms of the sequence $0,\frac aq,\frac a{q^2},\ldots$.
\end{thm}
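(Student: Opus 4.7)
\emph{Plan.} The direction $\supseteq$ is immediate: $0\in S(X)$ always, and each $a/q^n\in S(X)$ by Proposition~\ref{I}. If $a=0$ the statement is trivial, so assume $a>0$ and set $\lambda=1/q$. Two structural facts about $X$ will drive everything. First, since $0\in X\subseteq[0,+\infty)$ we have $S(X)\subseteq X$, with $\max X=a/(q-1)$. Second, $q>2$ is equivalent to $a/q>a/(q(q-1))$, which makes $X=\lambda X\sqcup(a/q+\lambda X)$ a disjoint union with main $X$-gap $(a/(q(q-1)),\,a/q)$; iterating gives $X\cap[0,a/q^m)=\lambda^m X$ for every $m\ge 0$, and each $\lambda^{m-1}X$ has the analogous gap $(a/(q^m(q-1)),\,a/q^m)$, so every element of $X$ has a unique $\{0,1\}$-digit subsum expansion.

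Let $\alpha\in S(X)$ with $\alpha>0$. Applying the definition at $x=a/q\in X$ rules out $\alpha>a/q$, since both $y=a/q\pm\alpha$ leave $X$ when $\alpha>a/q$ (one exceeds $\max X$ because $2a/q>a/(q-1)$ for $q>2$, the other is negative). If $\alpha=a/q$ we are done; otherwise $\alpha<a/q$ and the gap forces $\alpha\in\lambda X$, which allows me to write $\alpha=a/q^m+\alpha''$ with $m:=\min\{n:\epsilon_n(\alpha)=1\}\ge 2$ and $\alpha''\in\lambda^m X$. The remaining task is to show $\alpha''=0$, i.e.\ $\alpha=a/q^m$.

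Suppose for contradiction that $\alpha''>0$; let $m'>m$ be the next index of support of $\alpha$ and write $\alpha=a/q^m+a/q^{m'}+\gamma$ with $\gamma\in\lambda^{m'}X$. I apply the $S(X)$ condition at the witness $x=a/q^{m'}\in X$: since $\alpha>x$, the only option is $y=x+\alpha=a/q^m+2a/q^{m'}+\gamma\in X$. A short size estimate gives $y<a/q^{m-1}$, so $y\in X$ forces $y\in\lambda^{m-1}X=\lambda^mX\sqcup(a/q^m+\lambda^m X)$; combined with $y>a/q^m$, this yields $2a/q^{m'}+\gamma\in\lambda^m X$. When $m'=m+1$ this already fails: $2a/q^{m+1}>a/(q^m(q-1))=\max\lambda^m X$, an inequality equivalent to $q>2$.

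\textbf{Main obstacle.} The subtle case is $m'>m+1$. For $q>(3+\sqrt{5})/2$ one further round of the decomposition $\lambda^m X=\lambda^{m+1}X\sqcup(a/q^{m+1}+\lambda^{m+1}X)$ shows that $2a/q^{m'}+\gamma$ falls into the new gap $(a/(q^{m+1}(q-1)),\,a/q^{m+1})$, closing the argument. For the delicate range $q\in(2,(3+\sqrt{5})/2]$ I plan to iterate the cylinder decomposition, invoking Lemma~\ref{lsk}: its hypothesis is verified by taking $\lambda=1/q$ and $b=a$ (note $X\subseteq[0,a/(q-1)]\subseteq[0,a)$, and $(\lambda[0,b))\cap X=\lambda X$ follows from the $X$-gap $(a/(q(q-1)),a/q)$). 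The iteration, controlled by the finite or infinite support of $\alpha$, produces a residual value whose appropriate scaled image Lemma~\ref{lsk} forbids from lying in $X$, delivering the desired contradiction and completing the proof.
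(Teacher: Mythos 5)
Your inclusion ``$\supseteq$'' and the exclusion of $\alpha>a/q$ coincide with the paper's, but the core of the argument---ruling out a second digit in the expansion of $\alpha$---has a genuine gap, in fact two. First, the case you yourself flag as the main obstacle ($m'>m+1$ with $2<q\leqslant(3+\sqrt5)/2$) is only a plan: saying that an iteration of the cylinder decomposition ``produces a residual value'' which Lemma~\ref{lsk} forbids does not identify any concrete point whose non-membership in $X$ you can certify; Lemma~\ref{lsk} only propagates non-membership downward in scale, so you must first exhibit a non-member at some scale, and that is precisely what is missing. Second, and unacknowledged, the ``short size estimate $y<a/q^{m-1}$'' underlying the branch you do claim to settle ($m'=m+1$) is false for $q$ near $2$: since $\gamma$ can be as large as $a/(q^{m+1}(q-1))$, one only gets $y\leqslant\frac{a}{q^m}\bigl(1+\frac{2}{q}+\frac{1}{q(q-1)}\bigr)$, and requiring this to be below $a/q^{m-1}$ amounts to $q^3-2q^2-q+1>0$, which fails for $2<q\lesssim 2.247$. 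In that range $y$ may exceed $a/q^{m-1}$, so you cannot place $y$ in $\lambda^{m-1}X$ and the contradiction evaporates. Thus neither branch is closed for all $q>2$.

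The paper sidesteps both difficulties by not insisting that the witness be a term of the sequence. Writing $c_n=1/q^n$ and $\Xe(n)=\frac{1}{q^n(q-1)}$ for the tail (with $a=1$), for $t\in(c_2,\Xe(1)]$ it splits according to whether $t+\Xe(2)<c_1$. If so, the tail $\Xe(2)\in X$ is the witness: $t+\Xe(2)$ falls into the gap $(\Xe(1),c_1)$ and $\Xe(2)-t<0$. If not, the interval $(\Xe(1)-t,\,c_1-t)$ lies inside $[0,\Xe(2)]$ and has length $c_1-\Xe(1)=\frac{q-2}{q(q-1)}$, strictly greater than the length of every $X$-gap contained in $[0,\Xe(2)]$ (those are $q^{-2}$-scaled copies of gaps of $X$); hence it meets $X$, and any $x$ in the intersection satisfies $x<t$ and $x+t\in(\Xe(1),c_1)$, so it witnesses $t\notin S(X)$. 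This adaptive choice of a witness inside a translated gap is exactly the content of Lemma~\ref{l7}, works uniformly for all $q>2$ with no case split on $q$, and the top-scale exclusions are then transported to all scales by the self-similarity of $X$ together with Lemma~\ref{lsk}. To repair your digit-by-digit scheme you should replace the witness $a/q^{m'}$ by a point of $X$ chosen in $\bigl((\alpha,\beta)-t\bigr)$ for a suitable gap $(\alpha,\beta)$ in this manner.
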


\begin{proof}
Without loss of generality we can assume $a=1$.
The diameter of the set $X$ of subsums of the sequence $\{\frac1{q^n}\}_{n\geqslant 1}$ equals $\frac1{q-1} =\sum_{n\geqslant 1} \frac1{q^n}= \Xe (0)$ and we always have \[\textstyle c_n=\frac1{q^n}> \sum_{i\geqslant n+1} \frac1{q^i} = \frac1{q^n} \cdot \frac1{q-1}=\Xe(n) > c_{n+1}=\frac1{q^{n+1}}.\]
So, $ c_1=\frac1q \in X $ implies that no $t > c_1 $ belongs to $S(X)$.
Indeed, $c_1 -t <0$ and $c_1 +t > \frac2q > \frac1{q-1}.$
By Proposition \ref{I}, we get $\frac1q \in S(X)$.
The interval $(\frac1{q (q-1)},\frac1q )= (\Xe(1), c_1)$ is an $X$-gap.
So, $0\in X$ implies that no $t \in (\Xe(1), c_1) $ belongs to $S(X)$.

Now, suppose that $t \in (c_2,\Xe(1)]$ and $ t +\Xe(2) < c_1.$ Thus $\Xe(2)\in X$ implies that $t \notin S(X)$. Indeed, $\Xe(2) <c_2 <t$ implies $\Xe(2) - t <0$, and $\Xe(2) + c_2=\Xe(1)$ implies that the $X$-gap $(\Xe(1), c_1)$ has to contain $t +\Xe(2)$.

If $c_1 \leqslant t +\Xe(2)$ and $t \in (c_2,\Xe(1)]$, then the interval $( \Xe(1) - t, c_1 - t )$ is included in the interval $ [0, \Xe(2)]$. No $X$-gap of the length $\frac{q-2}{q(q-1)}= c_1 - \Xe(1)$ is 
contained in the interval $[0, \Xe(2)]$. Therefore, there exists $x\in X \cap ( \Xe(1) - t, c_1 - t )$---one can even find $x$ being the end of an $X$-gap, which implies that $t \notin S(X)$.
Again, by Proposition \ref{I}, we get $\frac1{q^2} \in S(X)$.
We proceed farther using Lemma \ref{lsk} with $\lambda = \frac1{q^n}.$
If we take advantage of the similarity of $X$ and $\frac1{q^n}\cdot X$ we get as a result that no $t\not= 0$ different from $\frac1{q^n}$, where $n>0$, belongs to $S(X)$.
\end{proof}

Note that, when we put $a = 2$ and $q = 3$, then Theorem \ref{2I} applies to the Cantor ternary set. But for $a\in \{2,3\}$ and $q=4$ this theorem applies to sets $\Ce_1$ and $\Ce_2$ which will be defined in Section \ref{X}.

\section{An example of a Cantorval}\label{X}

Following \cite[p. 324]{gn}, consider the set of subsums 
\[\textstyle \Xa = \left\{\sum_{n>0}\frac{x_n}{4^n}\colon \forall_{n} \; x_n \in \{0, 2,3,5\}\right\}.\]
Thus, $\Xa = \Ce_1 + \Ce_2$, where $ \Ce_1= \{ \sum_{n\in A}\frac{2}{4^{n}}\colon 0\notin A \subseteq \omega \} $
and $\Ce_2= \{ \sum_{n\in B}\frac{3}{4^{n}}\colon 0\notin B \subseteq \omega\}. $ 
Following \cite[p. 330]{mo}, because of its topological structure, one can call this set a \textit{Cantorval} (or an \textit{$\mathcal M$-Cantorval}).

Before discussing the affine properties of the Cantorval $\Xa$ we shall introduce the following useful notions. Every point $ x$ in $\Xa$ is determined by the sequence $ \{x_n\}_{n>0} $, where $x=\sum_{n>0}\frac{x_n}{4^n}$. Briefly, $x_n$ is called the 
$n$-th \textit{digit} of $x$. But the sequence $ \{x_n\}_{n>0}$ is called a \textit{digital representation} of the point $x\in \Xa$. Keeping in mind the formula for the sum of an infinite geometric series, we denote tails of series as follows:
\begin{itemize}
\item[]$\Ce_1(n)= \sum_{k=n+1} \frac2{4^k} = \frac23 \cdot \frac1{4^n}$ and $\Ce_1(0)=\frac23$; 
\item[] $\Ce_2(n)= \sum_{k=n+1} \frac3{4^k} = \frac1{4^n}$ and $\Ce_2(0)=1$;
\item[]$\Xe(n)= \sum_{k=n+1} \frac5{4^k} = \frac53 \cdot \frac1{4^n}$ and $\Xe(0)=\frac53$.
\end{itemize}

Since $\Xe(0) = \frac53$ we get $\Xa \subset [0, \frac{5}{3}]$.
The involution $h\colon \Xa \to \Xa$ defined by the formula \[\textstyle x\mapsto h(x)=\frac{5}{3} - x\] is the symmetry of $\Xa$ with respect to the point $\frac{5}{6} .$
In order to check this, it suffices to note that:
\[\textstyle\sum_{n\in A}\frac{2}{4^{n}} + \sum_{n\in B}\frac{3}{4^{n}}=x\in \Xa \Rightarrow \frac{5}{3} -x=\sum_{\substack{n\notin A\\ n>0}}\frac{2}{4^{n}} + \sum_{\substack{n\notin B\\ n>0}}\frac{3}{4^{n}}\in \Xa;
\]
and also that
\[\textstyle \frac{1}{2}+ \sum_{n>0}\frac{5}{4^{2n}}= \frac{3}{4}+ \sum_{n>0}\frac{5}{4^{2n+1}}=\frac{5}{6}\in \Xa.\]
So, we get $\Xa=\frac53 -\Xa$ and $\Xa=h[\Xa]$.
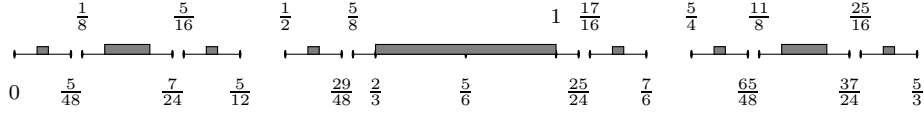
\begin{figure}[h]
\begin{tikzpicture}[xscale=1] \setlength{\dlugosc}{.15cm}\setlength{\kolko}{.04cm}
{\tiny
\draw (0\dlugosc,0) -- (5\dlugosc,0);
\draw (6\dlugosc,0) -- (14\dlugosc,0);
	\draw (15\dlugosc,0) -- (20\dlugosc,0);
	\draw (24\dlugosc,0) -- (29\dlugosc,0);
	\draw (30\dlugosc,0) -- (50\dlugosc,0); 
	\draw(51\dlugosc,0) -- (56\dlugosc,0);
	\draw (60\dlugosc,0) -- (65\dlugosc,0);
	\draw (66\dlugosc,0) -- (74\dlugosc,0);
	\draw (75\dlugosc,0) -- (80\dlugosc,0);
	
 \draw (0,-.5) node {$0$}; 
	\draw[x radius=.01,y radius=.05,fill] (0,0) circle;
	\draw (5\dlugosc,-.5) node {$\frac5{48}$}; 
	\draw[x radius=.01,y radius=.05,fill] (5\dlugosc,0) circle;
		\draw (6\dlugosc,+.5) node {$\frac1{8}$};
	\draw[x radius=.01,y radius=.05,fill] (6\dlugosc,0) circle;
		\draw (14\dlugosc,-.5) node {$\frac7{24}$}; 
	\draw[x radius=.01,y radius=.05,fill] (14\dlugosc,0) circle;
		\draw (15\dlugosc,+.5) node {$\frac5{16}$}; 
	\draw[x radius=.01,y radius=.05,fill] (15\dlugosc,0) circle;
		\draw (20\dlugosc,-.5) node {$\frac5{12}$};
	\draw[x radius=.01,y radius=.05,fill] (20\dlugosc,0) circle;
		\draw (24\dlugosc,+.5) node {$\frac1{2}$};
	\draw[x radius=.01,y radius=.05,fill] (24\dlugosc,0) circle;
		\draw (29\dlugosc,-.5) node {$\frac{29}{48}$};
	\draw[x radius=.01,y radius=.05,fill] (29\dlugosc,0) circle;
		\draw[x radius=.01,y radius=.05,fill] (30\dlugosc,0) circle;
	\draw (30\dlugosc,+.5) node {$\frac5{8}$}; 
	\draw (32\dlugosc,-.5) node {$\frac2{3}$}; 
	\draw (50\dlugosc,-.5) node {$\frac{25}{24}$};
	\draw[x radius=.01,y radius=.05,fill] (50\dlugosc,0) circle;
	\draw (51\dlugosc,+.5) node {$\frac{17}{16}$}; 
	\draw[x radius=.01,y radius=.05,fill] (51\dlugosc,0) circle;
		\draw (56\dlugosc,-.5) node {$\frac7{6}$};
	\draw[x radius=.01,y radius=.05,fill] (56\dlugosc,0) circle;
		\draw (60\dlugosc,+.5) node {$\frac5{4}$};
	\draw[x radius=.01,y radius=.05,fill] (60\dlugosc,0) circle;
		\draw (65\dlugosc,-.5) node {$\frac{65}{48}$};
	\draw[x radius=.01,y radius=.05,fill] (65\dlugosc,0) circle;
		\draw (66\dlugosc,+.5) node {$\frac{11}{8}$}; 
		\draw[x radius=.01,y radius=.05,fill] (66\dlugosc,0) circle;
			\draw (74\dlugosc,-.5) node {$\frac{37}{24}$};
		\draw[x radius=.01,y radius=.05,fill] (74\dlugosc,0) circle;
		\draw (75\dlugosc,+.5) node {$\frac{25}{16}$};
		\draw[x radius=.01,y radius=.05,fill] (75\dlugosc,0) circle;
		\draw (80\dlugosc,-.5) node {$\frac5{3}$}; 
		\draw[x radius=.01,y radius=.05,fill] (80\dlugosc,0) circle;
				\draw (40\dlugosc,-.5) node {$\frac56$}; 
		\draw[x radius=.01,y radius=0.05,fill] (40\dlugosc,0) circle;
		\draw (48\dlugosc,+.5) node {$1$}; 
		\draw[x radius=.01,y radius=.05,fill] (48\dlugosc,0) circle;
		\draw[x radius=.01,y radius=.05,fill] (32\dlugosc,0) circle;
		\draw[fill=gray] (2\dlugosc,0) rectangle (3\dlugosc,.1);
	
	\draw[fill=gray] (12\dlugosc,0) rectangle (8\dlugosc,.13);
	\draw[fill=gray] (17\dlugosc,0) rectangle (18\dlugosc,.1);
	\draw[fill=gray] (26\dlugosc,0) rectangle (27\dlugosc,.1);
	
	\draw[fill=gray] (48\dlugosc,0) rectangle (32\dlugosc,.13);
	\draw[fill=gray] (53\dlugosc,0) rectangle (54\dlugosc,.1);
	\draw[fill=gray] (62\dlugosc,0) rectangle (63\dlugosc,.1);
	\draw[fill=gray] (77\dlugosc,0) rectangle (78\dlugosc,.1);
	\draw[fill=gray] (68\dlugosc,0) rectangle (72\dlugosc,.13);
		}
		 \end{tikzpicture}
 \caption{An approximation of the Cantorval $\Xa \subset [0,\frac53]$.}\label{fig:an_approximation_of_the_Cantorval}
\end{figure}

In Figure \ref{fig:an_approximation_of_the_Cantorval} there are marked gaps $(\frac5{12},\frac12)$ and $(\frac{7}{6},\frac54)$, both of the length $\frac1{12}$.
Six gaps $(\frac5{48},\frac18)$, $(\frac7{24},\frac5{16})$, $(\frac{29}{48},\frac58)$, $(\frac{25}{24},\frac{17}{16})$, $(\frac{65}{48},\frac{11}{8})$ and $(\frac{37}{24},\frac{25}{16})$ 
have the length $\frac1{48}$.
The rest of gaps are shorter and have lengths not greater than $\frac1{192}$.
To describe intervals which lie in $\Xa$, we need the following.
Let
\[\textstyle
K_n =[\frac23, 1]\cap\{\sum_{i=1}^n \frac{x_i}{4^i}\colon \forall_i\; x_i \in \{0,2,3,5\} \}.
\]
We get $K_1 = \{\frac34\}$ and $ K_2= \{\frac{11}{16}, \frac{3}{4}, \frac{13}{16},\frac{7}{8}, \frac{15}{16}\}$.
Keeping in mind $\Ce_1(0)=\frac23$ and $\Ce_1(n) < \frac1{4^n}$, we check that $f^n_1=\frac3{4^n} + \sum_{i=1}^{n-1} \frac2{4^i}> \frac23$ is the smallest real number in $K_n$. Similarly, using $\Xe(n) = \frac53 \cdot \frac1{4^n} <\frac2{4^n}$ and $\Ce_2(0)=1$, check that $f^n_{|K_n|}=\sum_{i=1}^{n} \frac3{4^i}<1$ is the greatest real number in $K_n$. In fact, we have the following.

\begin{pro}\label{p2}
Reals from $K_n$ are distributed consecutively at the distance $\frac1{4^n}$, from $\frac3{4^n} + \sum_{i=1}^{n-1} \frac2{4^i}$ up to $\sum_{i=1}^{n} \frac3{4^i}$, in the interval $[\frac23, 1]$. Therefore $ |K_n| = \frac13(4^n-1) $ and $|K_{n+1}|= 4|K_n| +1$.
\end{pro}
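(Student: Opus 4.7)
The plan is to pass to integer coordinates. Setting $T_n = \{\sum_{i=1}^n x_i 4^{n-i} : x_i \in \{0,2,3,5\}\}$, elements of $K_n$ correspond bijectively to $m \in T_n \cap [\tfrac{2 \cdot 4^n}{3}, 4^n]$ via $m \mapsto m/4^n$. A direct geometric-series calculation gives
\[\textstyle
4^n \cdot f^n_1 = \tfrac{2 \cdot 4^n + 1}{3}, \qquad 4^n \cdot f^n_{|K_n|} = 4^n - 1,
\]
and since $2 \cdot 4^n \equiv 2 \pmod 3$, the first of these is exactly $\lceil \tfrac{2 \cdot 4^n}{3}\rceil$. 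So the proposition reduces to showing that every integer in $[\tfrac{2 \cdot 4^n + 1}{3},\, 4^n - 1]$ lies in $T_n$ and that $4^n \notin T_n$.

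The recurrence $T_{n+1} = 4T_n + \{0,2,3,5\}$, obtained by peeling off the first digit, is the workhorse. Since $1 \notin \{0,2,3,5\}$, the only way to write $m = 4k + d$ with $d \in \{0,2,3,5\}$ is $d = m \bmod 4$ when $m \not\equiv 1 \pmod 4$, and $d = 5$, $k = (m-5)/4$ when $m \equiv 1 \pmod 4$. Applied to $m = 4^n$ this forces $d = 0$ and $k = 4^{n-1}$, so $4^n \in T_n$ reduces to $4^{n-1} \in T_{n-1}$, which descends to the false $4 \in T_1 = \{0,2,3,5\}$; hence $4^n \notin T_n$ for every $n \geq 1$.

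The heart of the argument is an induction on $n$ showing that $T_n$ contains every integer in $\bigl[\tfrac{2 \cdot 4^n - 2}{3},\, 4^n - 1\bigr]$. The base $n = 1$ is immediate from $T_1 = \{0,2,3,5\}$. For the inductive step, given $m$ in the analogous range at level $n+1$, the case split above identifies a candidate $k$, and a short calculation on residues modulo $4$ shows $k$ lands inside the $n$-th range---the two extremal values $m = \tfrac{2 \cdot 4^{n+1} - 2}{3}$ and $m = 4^{n+1} - 1$ map to the respective endpoints of the $n$-th range, and all intermediate $m$ give $k$ properly in the interior.

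The main obstacle is precisely that the naive hypothesis ``$T_n \supseteq [\tfrac{2 \cdot 4^n + 1}{3}, 4^n - 1]$'' is too weak: the smallest relevant $m$ at level $n+1$ forces a $k$ one unit \emph{below} $\tfrac{2 \cdot 4^n + 1}{3}$, so the hypothesis must be widened by the single extra integer $\tfrac{2 \cdot 4^n - 2}{3}$. Once the induction is closed, $K_n$ is the arithmetic progression $\{m/4^n : m = \tfrac{2 \cdot 4^n + 1}{3}, \ldots, 4^n - 1\}$ with common difference $1/4^n$ and endpoints as claimed; counting yields $|K_n| = (4^n - 1)/3$, whence $|K_{n+1}| = 4|K_n| + 1$ follows by direct substitution.
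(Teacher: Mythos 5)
Your argument is correct, and it ultimately rests on the same self-similarity that drives the paper's induction --- in your integer coordinates, $T_{n+1}=4T_n+\{0,2,3,5\}$ --- but you run the induction in the opposite direction. The paper builds $K_n$ bottom-up: it forms the union $K_{n-1}\cup(\frac{2}{4^n}+K_{n-1})\cup(\frac{3}{4^n}+K_{n-1})\cup(\frac{5}{4^n}+K_{n-1})$, notes that consecutive translates interleave into a progression of step $\frac{1}{4^n}$, and then patches the two ends by hand, deleting the one point $\frac{5}{4^n}+\sum_{i=1}^{n-1}\frac{3}{4^i}>1$ and adjoining the two new points $f^n_1$ and $f^n_3$. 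You argue top-down: since $\{0,2,3,5\}$ is a complete residue system modulo $4$, each integer $m$ decomposes \emph{uniquely} as $4k+d$ with $d\in\{0,2,3,5\}$, so membership of $m$ in $T_{n+1}$ reduces to membership of one explicitly computable $k$ in $T_n$. This uniqueness makes the bookkeeping more transparent: the paper's ad hoc adjunction of $f^n_1$ and $f^n_3$ becomes your one-integer widening of the induction hypothesis to include $\frac{2\cdot 4^n-2}{3}$, and the exclusion of the right endpoint ($4^n\notin T_n$), which the paper leaves implicit, falls out of the same descent. One cosmetic slip: it is not true that every intermediate $m$ sends $k$ \emph{properly} into the interior of the level-$n$ range (for instance $m\equiv 3\pmod 4$ immediately above the left endpoint lands $k$ exactly on the endpoint $\frac{2\cdot 4^n-2}{3}$); but since all the induction needs is that $k$ lies in the closed range, this does not affect the argument.
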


\begin{proof}
Since $|K_1|=1$ and $|K_2|=5$, the assertions are correct in these cases.
Suppose that $K_{n-1}= \{ f^{n-1}_1, f^{n-1}_2, \ldots , f^{n-1}_{|K_{n-1}|}\},$ where
\[
f^{n-1}_1=\frac3{4^{n-1}}+\sum_{i=1}^{n-2} \frac2{4^i}, \mbox{ and } f^{n-1}_{j+1} - f^{n-1}_j = \frac1{4^{n-1}}
\]
for $0<j < |K_{n-1}|-1$; in consequence $f^{n-1}_{|K_{n-1}|}= \sum_{i=1}^{n-1} \frac3{4^i}$.
Consider the sum
\[\textstyle
K_{n-1} \cup (\frac2{4^n} + K_{n-1}) \cup (\frac3{4^n} + K_{n-1}) \cup (\frac5{4^n} + K_{n-1}),
\]
remove the point $\frac5{4^n} + \sum_{i=1}^{n-1} \frac3{4^i} >1$, and then add points $f^n_1=\frac3{4^n}+\sum_{i=1}^{n-1}\frac2{4^i}$ and $f^n_3=\frac5{4^n} + \sum_{i=1}^{n-1} \frac2{4^i}$.
We obtain the set \[K_n=\{ f^{n}_1, f^{n}_2, \ldots , f^{n}_{\frac13(4^n-1)}\} , \] which is what we need.
\end{proof}

\begin{cor}\label{c6}
The interval $[\frac23,1]$ is included in the Cantorval $ \Xa$.
\end{cor}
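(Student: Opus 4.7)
The plan is to combine Proposition \ref{p2} with the fact that $\Xa$ is closed, approximating any point of $[\frac23,1]$ by elements of $K_n$ and then passing to the limit.

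First, I would note that $K_n\subseteq\Xa$ for every $n$: a partial sum $\sum_{i=1}^n \frac{x_i}{4^i}$ with $x_i\in\{0,2,3,5\}$ is an element of $\Xa$, obtained by choosing $x_i=0$ for all $i>n$. Also, since $\Xa$ is the set of subsums of a sequence of positive reals with finite total mass, it is the continuous image of the compact Cantor space $\{0,1\}^{\omega}$, hence compact and in particular closed in $\mathbb{R}$.

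Next, I would fix $x\in[\frac23,1]$ and show that $\operatorname{dist}(x, K_n)\leqslant \frac{1}{4^n}$ for every $n$. Using $\sum_{i=1}^{n-1}\frac{2}{4^i}=\frac23-\frac{8}{3\cdot 4^n}$, a short calculation gives $f^n_1=\frac23+\frac{1}{3\cdot 4^n}$, and $f^n_{|K_n|}=\sum_{i=1}^n\frac{3}{4^i}=1-\frac{1}{4^n}$. A routine case analysis then handles the three ranges: if $x\leqslant f^n_1$ then $|x-f^n_1|\leqslant \frac{1}{3\cdot 4^n}$; if $x\geqslant f^n_{|K_n|}$ then $|x-f^n_{|K_n|}|\leqslant \frac{1}{4^n}$; and if $f^n_1\leqslant x\leqslant f^n_{|K_n|}$ then Proposition \ref{p2} (the fact that consecutive points of $K_n$ are at distance $\frac{1}{4^n}$) produces some $f^n_j$ with $|x-f^n_j|<\frac{1}{4^n}$.

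Finally, I would conclude: since each $K_n\subseteq\Xa$ and $\operatorname{dist}(x,\Xa)\leqslant \frac{1}{4^n}\to 0$, closedness of $\Xa$ forces $x\in\Xa$. I do not expect any real obstacle; the only mildly delicate point is the boundary arithmetic that pins down $f^n_1$ and $f^n_{|K_n|}$ within $\frac{1}{4^n}$ of the endpoints of $[\frac23,1]$, which is just a geometric-series computation.
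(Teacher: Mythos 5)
Your proposal is correct and follows essentially the same route as the paper: the paper's proof simply observes that $\bigcup_{n>0} K_n$ is dense in $[\frac23,1]$ (via Proposition \ref{p2}) and concludes, with the closedness of $\Xa$ and the inclusion $K_n\subseteq\Xa$ left implicit. You have merely made those implicit steps (compactness of $\Xa$ as a set of subsums, the endpoint arithmetic $f^n_1=\frac23+\frac1{3\cdot4^n}$ and $f^n_{|K_n|}=1-\frac1{4^n}$) explicit, and they check out.
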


\begin{proof}
The union $\bigcup \{K_n\colon n>0\} $ is dense in the interval $[\frac23,1]$.
Hence, Proposition \ref{p2} implies the assertion.
\end{proof}

Note that it has observed that $[\frac34,1] \subset \Xa$, see \cite{gn} or compare \cite{ni}.
Since $\Xa$ is centrally symmetric with $\frac56$ as a point of inversion, this yields another proof of the above corollary.
However, our proof seems to be new and it is different than the one included in \cite{gn}.

Put $C_n= \frac1{4^n} \cdot \Xa=\Xa \cap [0, \frac5{3\cdot 4^n}]$, for $n\in \omega$. So, each $C_n$ is an affine copy of $\Xa$.

\begin{pro}\label{p3}
The subset $\Xa \setminus [\frac23, 1]\subset \Xa$ is the union of pairwise disjoint affine copies of $\Xa$. In particular, this union includes two isometric copies of $C_n=\frac1{4^n} \cdot \Xa$, for every $n>0$.
\end{pro}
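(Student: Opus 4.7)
The plan is to combine a digit-based analysis of $\Xa \cap [0,\frac23)$ with the central symmetry $h(x)=\frac53 - x$. Since $h$ is an involution carrying $\Xa$ onto itself and satisfies $h(\frac23)=1$, it interchanges $[0,\frac23)$ and $(1,\frac53]$ isometrically, so it suffices to decompose $\Xa \cap [0,\frac23)$ and transport the result under $h$.

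Given $x=\sum_{n>0}\frac{x_n}{4^n}\in \Xa$ with $x_n\in\{0,2,3,5\}$, the inequality $x<\frac23$ forces $x_1\in\{0,2\}$: the choices $x_1=3$ and $x_1=5$ yield $x\geqslant \frac34$ and $x\geqslant \frac54$ respectively. If $x_1=0$ then $x\in C_1=\frac14 \Xa\subseteq[0,\frac{5}{12}]$; if $x_1=2$ then $x=\frac12+\frac14 y$ for some $y\in\Xa$, and $x<\frac23$ becomes $y<\frac23$. This yields the self-similar identity
\[
\Xa \cap [0,\tfrac23) \,=\, C_1\,\cup\,\bigl(\tfrac12+\tfrac14(\Xa \cap [0,\tfrac23))\bigr),
\]
whose two summands are disjoint, lying in $[0,\frac{5}{12}]$ and $[\frac12,\frac23)$. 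Setting $s_n=\sum_{k=1}^{n-1}\frac{2}{4^k}$, so that $s_1=0$, $s_{n+1}=s_n+\frac{2}{4^n}$, and $s_n\to \frac23$, a straightforward induction shows that iterating the identity $N$ times leaves a residual contained in $[s_{N+1},\frac23)$, whence
\[
\Xa \cap [0,\tfrac23) \,=\, \bigcup_{n\geqslant 1}(s_n+C_n).
\]
Each $s_n+C_n$ is an affine copy of $\Xa$ scaled by $4^{-n}$, contained in $[s_n, s_n+\frac{5}{3\cdot 4^n}]$. Because $s_{n+1}-(s_n+\frac{5}{3\cdot 4^n})=\frac{2}{4^n}-\frac{5}{3\cdot 4^n}=\frac{1}{3\cdot 4^n}>0$, these enclosing intervals are pairwise disjoint, and so are the copies.

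Applying $h$ and using $\Xa=\frac53-\Xa$ (so $-C_n$ is a translate of $C_n$) converts this decomposition into
\[
\Xa \cap (1,\tfrac53] \,=\, \bigcup_{n\geqslant 1}(t_n+C_n),\qquad t_n=\tfrac53-s_n-\tfrac{5}{3\cdot 4^n}=1+\tfrac{1}{4^n}.
\]
The two halves are disjoint since they lie in $[0,\frac23)$ and $(1,\frac53]$, so together they realise the desired decomposition of $\Xa\setminus[\frac23,1]$ into pairwise disjoint affine copies of $\Xa$, and for each $n>0$ they contain the two claimed isometric copies $s_n+C_n$ and $t_n+C_n$ of $C_n$. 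The main obstacle I anticipate is the inductive bookkeeping that verifies that after $N$ unfoldings the residual is precisely $s_{N+1}+\frac{1}{4^N}(\Xa\cap[0,\frac23))$ and that it contracts to $\{\frac23\}$; once this is in hand, the remaining disjointness statements are elementary arithmetic, driven by the inequality $\frac{2}{4^n}>\frac{5}{3\cdot 4^n}$.
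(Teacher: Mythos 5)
Your proof is correct and follows essentially the same route as the paper: the copies you exhibit, $s_n+C_n$ with $s_n=\sum_{k=1}^{n-1}\frac{2}{4^k}$ together with their images under the symmetry $h(x)=\frac53-x$, are exactly the copies $\sum_{i=1}^{n}\frac2{4^i}+C_{n+1}$ and $h[\sum_{i=1}^{n}\frac2{4^i}+C_{n+1}]$ named in the paper's (much terser) proof. Your digit analysis and the unfolding of the self-similar identity $\Xa\cap[0,\frac23)=C_1\cup(\frac12+\frac14(\Xa\cap[0,\frac23)))$ merely supply the verification of exhaustiveness and disjointness that the paper leaves implicit.
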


\begin{proof}
The desired affine copies of $\Xa$ are $C_1$ and $\frac54 + C_1= h[C_1]$, $\frac12 + C_2$ and $h[\frac12 + C_2]$, and so on, i.e., $\sum_{i=1}^{n}\frac2{4^n} + C_{n+1}$ and $h[\sum_{i=1}^{n}\frac2{4^n} + C_{n+1}]$.
\end{proof}

\begin{pro}\label{p4}
The subset $\Xa \setminus ((\frac23, 1)\cup (\frac1{6}, \frac14) \cup (\frac{17}{12},\frac32))\subset \Xa$ is the union of six pairwise disjoint affine copies of $D= [0,\frac16] \cap \Xa$.
\end{pro}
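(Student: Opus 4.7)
The plan is to exhibit the six affine copies of $D = \Xa\cap[0,\frac{1}{6}]$ explicitly as
\[
D,\quad \tfrac{5}{12}-D,\quad \tfrac{1}{2}+D,\quad \tfrac{7}{6}-D,\quad \tfrac{5}{4}+D,\quad \tfrac{5}{3}-D,
\]
paired off by the involution $h(x)=\frac{5}{3}-x$. These six sets sit in the pairwise disjoint closed intervals $[0,\frac{1}{6}]$, $[\frac{1}{4},\frac{5}{12}]$, $[\frac{1}{2},\frac{2}{3}]$, $[1,\frac{7}{6}]$, $[\frac{5}{4},\frac{17}{12}]$ and $[\frac{3}{2},\frac{5}{3}]$, so pairwise disjointness comes for free.

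To verify that each copy is a subset of $\Xa$ I will exploit $C_1=\frac{1}{4}\Xa=\Xa\cap[0,\frac{5}{12}]$, which forces $D=C_1\cap[0,\frac{1}{6}]$. Rescaling the symmetry of $\Xa$ about $\frac{5}{6}$ yields the involution $y\mapsto\frac{5}{12}-y$ of $C_1$, hence $\frac{5}{12}-D\subset C_1\subset\Xa$. For $\frac{1}{2}+D$ I will argue via digital representations: any $x\in\Xa\cap[\frac{1}{2},\frac{2}{3}]$ must satisfy $x_1=2$ (since $\Xe(1)=\frac{5}{12}<\frac{1}{2}$ rules out $x_1=0$, while $x_1\geqslant 3$ forces $x\geqslant\frac{3}{4}>\frac{2}{3}$), so $x-\frac{1}{2}=\frac{z}{4}$ with $z\in\Xa$ and $z\leqslant\frac{2}{3}$; conversely, prepending the digit $2$ to any admissible representation of $4d$ for $d\in D$ exhibits $\frac{1}{2}+d$ as an element of $\Xa$. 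Combined with $C_1\cap[0,\frac{1}{6}]=\Xa\cap[0,\frac{1}{6}]=D$, this yields $\Xa\cap[\frac{1}{2},\frac{2}{3}]=\frac{1}{2}+D$. The remaining three copies are the $h$-images of the first three and therefore also lie in $\Xa$.

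To show that the six copies exhaust $\Xa\setminus((\frac{2}{3},1)\cup(\frac{1}{6},\frac{1}{4})\cup(\frac{17}{12},\frac{3}{2}))$, I will note that removing from $[0,\frac{5}{3}]$ both the three prescribed intervals and the two $\Xa$-gaps $(\frac{5}{12},\frac{1}{2})$ and $(\frac{7}{6},\frac{5}{4})$ of length $\frac{1}{12}$ (highlighted in Figure \ref{fig:an_approximation_of_the_Cantorval}) leaves precisely the six closed intervals listed above. Since those two gaps are disjoint from $\Xa$, intersecting with $\Xa$ recovers the required equality.

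The main technical point is the digital identification $\Xa\cap[\frac{1}{2},\frac{2}{3}]=\frac{1}{2}+D$, which hinges on the threshold computations $\Xe(1)=\frac{5}{12}$ and $4\cdot\frac{1}{6}=\frac{2}{3}$; once this is in place, the symmetry $h$ together with $D=C_1\cap[0,\frac{1}{6}]$ handles the remaining copies automatically.
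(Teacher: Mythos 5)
Your proof is correct and identifies exactly the same six affine copies as the paper, namely $D$, $\tfrac12+D$, $\tfrac54+D$ and their images under the involution $h(x)=\tfrac53-x$; the paper merely exhibits these in Figure \ref{fig:the_arrangement_of_affine_copies_of_D} without further argument. Your verification via $C_1=\tfrac14\cdot\Xa=\Xa\cap[0,\tfrac5{12}]$, the symmetry $h$, the two $\Xa$-gaps $(\tfrac5{12},\tfrac12)$ and $(\tfrac76,\tfrac54)$, and the first-digit analysis on $[\tfrac12,\tfrac23]$ simply supplies the details the paper leaves to the reader.
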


\begin{proof}
The desired affine copies of $D= \frac14 \cdot (\Xa\cap [0,\frac23])$ lie as shown in Figure \ref{fig:the_arrangement_of_affine_copies_of_D}.
%\hspace{-0.3cm}
\begin{figure}[h]
\begin{tikzpicture}{\tiny \setlength{\dlugosc}{.15cm}\setlength{\kolko}{.04cm}
%\draw (0\dlugosc,0) -- (8\dlugosc,0);
%\draw[decorate,decoration={brace,mirror,raise=5pt}] (0\dlugosc,3) -- (8\dlugosc,3);
\draw[thick] (0\dlugosc,0) -- (5\dlugosc,0);
\draw[thick] (6\dlugosc,0) -- (14\dlugosc,0);
	\draw[thick] (15\dlugosc,0) -- (20\dlugosc,0);
	\draw[thick] (24\dlugosc,0) -- (29\dlugosc,0);
	\draw[thick] (30\dlugosc,0) -- (50\dlugosc,0); 
	\draw[thick](51\dlugosc,0) -- (56\dlugosc,0);
	\draw[thick] (60\dlugosc,0) -- (65\dlugosc,0);
	\draw[thick] (66\dlugosc,0) -- (74\dlugosc,0);
	\draw[thick] (75\dlugosc,0) -- (80\dlugosc,0);
 \draw (0\dlugosc,+.5) node {$0$};
 \draw[x radius=0.01,y radius=.05,fill] (0\dlugosc,0) circle;
		\draw[x radius=0.01,y radius=.05, fill] (5\dlugosc,0) circle;
		\draw[x radius=0.01,y radius=.05,fill] (6\dlugosc,0) circle;
		\draw[x radius=0.01,y radius=.05,fill] (14\dlugosc,0) circle;
		\draw[x radius=0.01,y radius=.05,fill] (15\dlugosc,0) circle;
	\draw (4\dlugosc,-.5) node {$D$};
	\draw (28\dlugosc,-.5) node {$\frac12+D$};
	\draw (64\dlugosc,-.5) node {$\frac54+D$};
	\draw (52\dlugosc,-.5) node {$h[\frac12+D]$};
	\draw (16\dlugosc,-.5) node {$h[\frac54+D]$};
	\draw (76\dlugosc,-.5) node {$h[D]$};	
	\draw (8\dlugosc,+.5) node {$\frac{1}{6}$}; \draw[x radius=0.01,y radius=.05,fill] (8\dlugosc,0) circle;
	\draw (12\dlugosc,+.5) node {$\frac{1}{4}$}; \draw[x radius=0.01,y radius=.05,fill] (12\dlugosc,0) circle;
	\draw (20\dlugosc,+.5) node {$\frac5{12}$}; \draw[x radius=0.01,y radius=.05,fill] (20\dlugosc,0) circle;
	\draw (24\dlugosc,+.5) node {$\frac1{2}$}; \draw[x radius=0.01,y radius=.05,fill] (24\dlugosc,0) circle;
		\draw[x radius=0.01,y radius=.05,fill] (29\dlugosc,0) circle;
		\draw[x radius=0.01,y radius=.05,fill] (30\dlugosc,0) circle;
	\draw (32\dlugosc,+.5) node {$\frac2{3}$}; \draw[x radius=0.01,y radius=.05,fill] (32\dlugosc,0) circle;
	\draw (48\dlugosc,+.5) node {$1$}; \draw[x radius=0.01,y radius=.05,fill] (48\dlugosc,0) circle;
		\draw[x radius=0.01,y radius=.05,fill] (50\dlugosc,0) circle;
		\draw[x radius=0.01,y radius=.05,fill] (51\dlugosc,0) circle;
	\draw (56\dlugosc,+.5) node {$\frac7{6}$}; \draw[x radius=0.01,y radius=.05,fill] (56\dlugosc,0) circle;
	\draw (60\dlugosc,+.5) node {$\frac5{4}$}; \draw[x radius=0.01,y radius=.05,fill] (60\dlugosc,0) circle;
	\draw[x radius=0.01,y radius=.05,fill] (65\dlugosc,0) circle;
	\draw[x radius=0.01,y radius=.05,fill] (66\dlugosc,0) circle;
	\draw[x radius=0.01,y radius=.05,fill] (74\dlugosc,0) circle;
	\draw[x radius=0.01,y radius=.05,fill] (75\dlugosc,0) circle;
	\draw (80\dlugosc,+.5) node {$\frac5{3}$}; \draw[x radius=0.01,y radius=.05,fill] (80\dlugosc,0) circle;
	\draw[fill=gray] (2\dlugosc,0) rectangle (3\dlugosc,.1);
	
	\draw[fill=gray] (12\dlugosc,0) rectangle (8\dlugosc,.13);
	\draw[fill=gray] (17\dlugosc,0) rectangle (18\dlugosc,.1);
	\draw[fill=gray] (26\dlugosc,0) rectangle (27\dlugosc,.1);
	
	\draw[fill=gray] (48\dlugosc,0) rectangle (32\dlugosc,.13);
	\draw[fill=gray] (53\dlugosc,0) rectangle (54\dlugosc,.1);
	\draw[fill=gray] (62\dlugosc,0) rectangle (63\dlugosc,.1);
	\draw[fill=gray] (77\dlugosc,0) rectangle (78\dlugosc,.1);
	\draw (68\dlugosc,+.5) node {$\frac{17}{12}$}; \draw[x radius=0.01,y radius=.05,fill] (68\dlugosc,0) circle;
	\draw (72\dlugosc,+.5) node {$\frac{3}{2}$}; \draw[x radius=0.01,y radius=.05,fill] (72\dlugosc,0) circle;	
	\draw[fill=gray] (68\dlugosc,0) rectangle (72\dlugosc,.13);
	\draw[x radius=0.01,y radius=.05,fill] (2\dlugosc,0) circle;
		\draw[x radius=0.01,y radius=.05,fill] (3\dlugosc,0) circle;

		\draw[x radius=0.01,y radius=.05,fill] (17\dlugosc,0) circle;
		\draw[x radius=0.01,y radius=.05,fill] (18\dlugosc,0) circle;	
	
		\draw[x radius=0.01,y radius=.05,fill] (26\dlugosc,0) circle;
	\draw[x radius=0.01,y radius=.05,fill] (27\dlugosc,0) circle;	
	
	\draw[x radius=0.01,y radius=.05,fill] (62\dlugosc,0) circle;
		\draw[x radius=0.01,y radius=.05,fill] (63\dlugosc,0) circle;	
	
	\draw[x radius=0.01,y radius=.05,fill] (77\dlugosc,0) circle;
	\draw[x radius=0.01,y radius=.05,fill] (78\dlugosc,0) circle;	
	
		\draw[x radius=0.01,y radius=.05,fill] (53\dlugosc,0) circle;
		\draw[x radius=0.01,y radius=.05,fill] (54\dlugosc,0) circle;	
	
	\draw[dashed] (0,-\dlugosc) -- (0,-1.5\dlugosc) -- (8\dlugosc,-1.5\dlugosc) -- (8\dlugosc,-\dlugosc);
	\draw[dashed] (12\dlugosc,-\dlugosc) -- (12\dlugosc,-1.5\dlugosc) -- (20\dlugosc,-1.5\dlugosc) -- (20\dlugosc,-\dlugosc);
	\draw[dashed] (24\dlugosc,-\dlugosc) -- (24\dlugosc,-1.5\dlugosc) -- (32\dlugosc,-1.5\dlugosc) -- (32\dlugosc,-\dlugosc);
	\draw[dashed] (48\dlugosc,-\dlugosc) -- (48\dlugosc,-1.5\dlugosc) -- (56\dlugosc,-1.5\dlugosc) -- (56\dlugosc,-\dlugosc);
	\draw[dashed] (60\dlugosc,-\dlugosc) -- (60\dlugosc,-1.5\dlugosc) -- (68\dlugosc,-1.5\dlugosc) -- (68\dlugosc,-\dlugosc);
	\draw[dashed] (72\dlugosc,-\dlugosc) -- (72\dlugosc,-1.5\dlugosc) -- (80\dlugosc,-1.5\dlugosc) -- (80\dlugosc,-\dlugosc);}
\end{tikzpicture}
\caption{The arrangement of affine copies of $D$.}\label{fig:the_arrangement_of_affine_copies_of_D}
\end{figure}
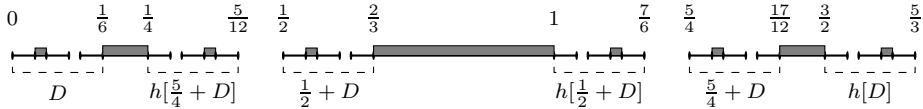
\end{proof}

\begin{cor}\label{c9}
The Cantorval $\Xa\subset [0,\frac53]$ has Lebesgue measure $1$.
\end{cor}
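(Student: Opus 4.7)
The plan is to combine Corollary \ref{c6} and Proposition \ref{p3} via the $\sigma$-additivity of Lebesgue measure $\lambda$ in order to obtain a one-variable linear equation for $m := \lambda(\Xa)$, whose unique solution turns out to be $1$.

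First I would split $\Xa$ as the disjoint union of $[\tfrac{2}{3}, 1]$, which is contained in $\Xa$ by Corollary \ref{c6}, and $\Xa \setminus [\tfrac{2}{3}, 1]$. The interval contributes measure $\tfrac{1}{3}$. Next I would invoke Proposition \ref{p3} to describe the complementary piece as a pairwise disjoint union of affine copies of $\Xa$, containing exactly two isometric translates of $C_n = \tfrac{1}{4^n} \cdot \Xa$ for every $n \geq 1$. By translation invariance and the scaling law of $\lambda$, each such translate has measure $m/4^n$, so summing contributions and applying $\sigma$-additivity gives
\[
m = \frac{1}{3} + \sum_{n=1}^{\infty} \frac{2m}{4^n} = \frac{1}{3} + \frac{2m}{3},
\]
whence $m = 1$.

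There will be essentially no obstacle: the hard geometric and combinatorial work has already been carried out in Propositions \ref{p2} and \ref{p3} and in Corollary \ref{c6}, so this final step reduces to a routine self-similarity computation. The only implicit check is that $\Xa$ is Lebesgue measurable, which holds because $\Xa$ is a compact (hence Borel) subset of $\mathbb{R}$, being the image of $\{0,2,3,5\}^{\omega}$ under the continuous map $(x_n)_{n>0} \mapsto \sum_{n>0} x_n/4^n$.
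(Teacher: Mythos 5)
Your proof is correct, but it takes a genuinely different route from the paper's. The paper works with the complement: using the correspondence between $\Xa$-gaps and $\Xa$-intervals (Figure \ref{fig:the_correspondence_of_gaps}) together with Propositions \ref{p3} and \ref{p4}, it sums the lengths of all gaps in $[0,\tfrac53]\setminus\Xa$ explicitly, $\tfrac16+6\cdot\tfrac1{3\cdot4^2}+\tfrac18\cdot\tfrac34+\dots=\tfrac23$, and concludes $\lambda(\Xa)=\tfrac53-\tfrac23=1$. You instead work with $\Xa$ itself: from the proof of Proposition \ref{p3} you read off that $\Xa\setminus[\tfrac23,1]$ is exactly the pairwise disjoint union of two isometric copies of $C_n=\tfrac1{4^n}\cdot\Xa$ for each $n\geqslant1$ (the statement only says the union \emph{includes} such copies, but the proof lists them all, so your use of ``exactly'' is justified), which combined with Corollary \ref{c6} yields the fixed-point equation $m=\tfrac13+\tfrac{2m}{3}$; this determines $m=1$ uniquely because the self-similarity coefficient $\tfrac23$ is strictly less than $1$. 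Your argument needs only Corollary \ref{c6} and Proposition \ref{p3} and avoids enumerating gaps by length, so it is shorter and less error-prone; the paper's computation, on the other hand, produces the extra information that the $\Xa$-gaps have total length $\tfrac23$ and how that length is distributed, which the paper reuses right afterwards to show that the boundary $\Xa\setminus\int\Xa$ is a null set. Your closing remark on measurability (compactness of $\Xa$ as a continuous image of $\{0,2,3,5\}^{\omega}$) is a point the paper leaves implicit.
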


\begin{proof}
There exists a one-to-one correspondence between $\Xa$-gaps and $\Xa$-interval as it is shown in Figure \ref{fig:the_correspondence_of_gaps}.
\begin{figure}
\begin{tikzpicture}\setlength{\dlugosc}{.15cm}
{\tiny
 \draw[thick] (0\dlugosc,0) -- (5\dlugosc,0);
	\draw[thick] (6\dlugosc,0) -- (14\dlugosc,0);
	\draw[thick] (15\dlugosc,0) -- (20\dlugosc,0);
	\draw[thick] (24\dlugosc,0) -- (29\dlugosc,0);
	\draw[thick] (30\dlugosc,0) -- (50\dlugosc,0); 
	\draw[thick](51\dlugosc,0) -- (56\dlugosc,0);
	\draw[thick] (60\dlugosc,0) -- (65\dlugosc,0);
	\draw[thick] (66\dlugosc,0) -- (74\dlugosc,0);
	\draw[thick] (75\dlugosc,0) -- (80\dlugosc,0);
 \draw (0\dlugosc,-.5) node {$0$}; \draw[x radius=.01,y radius=.05,fill] (0\dlugosc,0) circle;
		\draw[x radius=.01,y radius=.05, fill] (5\dlugosc,0) circle;
		\draw[x radius=.01,y radius=.05,fill] (6\dlugosc,0) circle;
		\draw[x radius=.01,y radius=.05,fill] (14\dlugosc,0) circle;
		\draw[x radius=.01,y radius=.05,fill] (15\dlugosc,0) circle;
	\draw (8\dlugosc,+.5) node {$\frac{1}{6}$}; \draw[x radius=.01,y radius=.05,fill] (8\dlugosc,0) circle;
	\draw (12\dlugosc,-.5) node {$\frac{1}{4}$}; \draw[x radius=.01,y radius=.05,fill] (12\dlugosc,0) circle;
	\draw (20\dlugosc,-.5) node {$\frac5{12}$}; \draw[x radius=.01,y radius=.05,fill] (20\dlugosc,0) circle;
	\draw (24\dlugosc,-.5) node {$\frac1{2}$}; \draw[x radius=.01,y radius=.05,fill] (24\dlugosc,0) circle;
		\draw[x radius=.01,y radius=.05,fill] (29\dlugosc,0) circle;
		\draw[x radius=.01,y radius=.05,fill] (30\dlugosc,0) circle;
	\draw (32\dlugosc,+.5) node {$\frac2{3}$}; \draw[x radius=.01,y radius=.05,fill] (32\dlugosc,0) circle;
	\draw (48\dlugosc,+.5) node {$1$}; \draw[x radius=.01,y radius=.05,fill] (48\dlugosc,0) circle;
		\draw[x radius=.01,y radius=.05,fill] (50\dlugosc,0) circle;
		\draw[x radius=.01,y radius=.05,fill] (51\dlugosc,0) circle;
	\draw (56\dlugosc,-.5) node {$\frac7{6}$}; \draw[x radius=.01,y radius=.05,fill] (56\dlugosc,0) circle;
	\draw (60\dlugosc,-.5) node {$\frac5{4}$}; \draw[x radius=.01,y radius=.05,fill] (60\dlugosc,0) circle;
		\draw[x radius=.01,y radius=.05,fill] (65\dlugosc,0) circle;
		\draw[x radius=.01,y radius=.05,fill] (66\dlugosc,0) circle;
	\draw[x radius=.01,y radius=.05,fill] (74\dlugosc,0) circle;
	\draw[x radius=.01,y radius=.05,fill] (75\dlugosc,0) circle;
	\draw (80\dlugosc,-.5) node {$\frac5{3}$}; \draw[x radius=.01,y radius=.05,fill] (80\dlugosc,0) circle;
		%\draw (96\dlugosc,+.5) node {$2$}; \draw[x radius=.01,y radius=.05,fill] (96\dlugosc,0) circle;
	\draw[fill=gray] (48\dlugosc,0) rectangle (32\dlugosc,.13);
	\draw[fill=gray] (12\dlugosc,0) rectangle (8\dlugosc,.13);
	\draw (68\dlugosc,-.5) node {$\frac{17}{12}$}; \draw[x radius=.01,y radius=.05,fill] (68\dlugosc,0) circle;
	\draw (72\dlugosc,+.5) node {$\frac{3}{2}$}; \draw[x radius=.01,y radius=.05,fill] (72\dlugosc,0) circle;	
	\draw[fill=gray] (68\dlugosc,0) rectangle (72\dlugosc,.13);
	\draw[x radius=.01,y radius=.05,fill] (2\dlugosc,0) circle;
		\draw[x radius=.01,y radius=.05,fill] (3\dlugosc,0) circle;	
	\draw[fill=gray] (2\dlugosc,0) rectangle (3\dlugosc,.1);
		\draw[x radius=.01,y radius=.05,fill] (17\dlugosc,0) circle;
		\draw[x radius=.01,y radius=.05,fill] (18\dlugosc,0) circle;	
	\draw[fill=gray] (17\dlugosc,0) rectangle (18\dlugosc,.1);
		\draw[x radius=.01,y radius=.05,fill] (26\dlugosc,0) circle;
		\draw[x radius=.01,y radius=.05,fill] (27\dlugosc,0) circle;	
	\draw[fill=gray] (26\dlugosc,0) rectangle (27\dlugosc,.1);
		\draw[x radius=.01,y radius=.05,fill] (62\dlugosc,0) circle;
		\draw[x radius=.01,y radius=.05,fill] (63\dlugosc,0) circle;	
	\draw[fill=gray] (62\dlugosc,0) rectangle (63\dlugosc,.1);
		\draw[x radius=.01,y radius=.05,fill] (77\dlugosc,0) circle;
		\draw[x radius=.01,y radius=.05,fill] (78\dlugosc,0) circle;	
	\draw[fill=gray] (77\dlugosc,0) rectangle (78\dlugosc,.1);
		\draw[x radius=.01,y radius=.05,fill] (53\dlugosc,0) circle;
	\draw[x radius=.01,y radius=.05,fill] (54\dlugosc,0) circle;	
	\draw[fill=gray] (53\dlugosc,0) rectangle (54\dlugosc,.1);
		\draw[<->,densely dotted] (58\dlugosc,.2) -- (58\dlugosc,.6) -- (70\dlugosc,.6) -- (70\dlugosc,.2);
	\draw[<->,densely dotted] (10\dlugosc,.2) -- (10\dlugosc,.6) -- (22\dlugosc,.6) -- (22\dlugosc,.2);
	\draw[<->,densely dotted] (2.5\dlugosc,.2) -- (2.5\dlugosc,.4) -- (5.5\dlugosc,.4) -- (5.5\dlugosc,.2);
	\draw[<->,densely dotted] (14.5\dlugosc,.2) -- (14.5\dlugosc,.4) -- (17.5\dlugosc,.4) -- (17.5\dlugosc,.2);
	\draw[<->,densely dotted] (26.5\dlugosc,.2) -- (26.5\dlugosc,.4) -- (29.5\dlugosc,.4) -- (29.5\dlugosc,.2);
	\draw[<->,densely dotted] (50.5\dlugosc,.2) -- (50.5\dlugosc,.4) -- (53.5\dlugosc,.4) -- (53.5\dlugosc,.2);
	\draw[<->,densely dotted] (62.5\dlugosc,.2) -- (62.5\dlugosc,.4) -- (65.5\dlugosc,.4) -- (65.5\dlugosc,.2);
	\draw[<->,densely dotted] (74.5\dlugosc,.2) -- (74.5\dlugosc,.4) -- (77.5\dlugosc,.4) -- (77.5\dlugosc,.2);
	}
\end{tikzpicture}
\caption{The correspondence between $\Xa$-gaps and $\Xa$-intervals.}\label{fig:the_correspondence_of_gaps}
\end{figure}
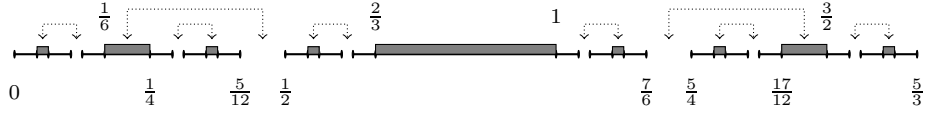
In view of Propositions \ref{p3} and \ref{p4}, we calculate the sum of lengths of all gaps which lie in $[0,\frac53] \setminus \Xa$ as follows: 
\[\textstyle
\frac16 + 6 \cdot \frac1{3\cdot 4^2 } + \frac18 \cdot \frac34 + \ldots + \frac18 \cdot \left(\frac34\right)^n + \ldots = \frac16 + \frac18 \sum_{n\geqslant 0} \left(\frac34\right)^n = \frac23.
\]
Since $\frac53 - \frac23 =1$ we are done.
\end{proof}

If we remove the longest interval from $\frac1{4^n}\cdot D$, then we get the union of three copies of $D$, each congruent to $\frac1{4^{n+1}}\cdot D$.
This observation---we used it above by default---is sufficient to calculate the sum of lengths of all $\Xa$-intervals as follows:
\[\textstyle
\frac13+\frac16+6\cdot\frac1{3\cdot4^2}+\ldots+\frac18\cdot\left(\frac34\right)^n+\ldots=1.
\]
Therefore the boundary $\Xa \setminus \int \Xa $ is a null set.

\section{Computing the center of distances}
In case of subsets of the real line we formulate the following lemma.

\begin{lem}\label{l7}
Given a set $C\subseteq [0,+\infty)$ disjoint from an interval $(\alpha, \beta)$, assume that $x\in [0, \frac{\alpha}{2}] \cap C$. Then the center of distances $S(C)$ is disjoint from the interval $(\alpha - x, \beta -x)$, i.e., 
$S(C)\cap ((\alpha, \beta) - x)=\emptyset$.
\end{lem}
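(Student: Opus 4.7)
The plan is to prove the contrapositive-in-spirit statement: fix an arbitrary $t\in(\alpha-x,\beta-x)$ and exhibit a point of $C$ that witnesses $t\notin S(C)$, namely the point $x$ itself. By definition of $S(C)$, to show $t\notin S(C)$ it suffices to find some $z\in C$ for which neither $z+t$ nor $z-t$ lies in $C$; the candidate $z=x$ will work.

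First I would verify that $x+t\notin C$. This is immediate: since $t>\alpha-x$ and $t<\beta-x$, we have $x+t\in(\alpha,\beta)$, and by hypothesis this interval is disjoint from $C$. Next I would verify that $x-t\notin C$. Here I use the hypothesis $x\leqslant \alpha/2$, which gives $\alpha-x\geqslant \alpha/2\geqslant x$. Combined with $t>\alpha-x$ this yields $t>x$, so $x-t<0$; since $C\subseteq[0,+\infty)$, the point $x-t$ cannot belong to $C$.

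Putting the two observations together, no $y\in C$ satisfies $|x-y|=t$, so $t$ fails to be in $S(C)$. Since $t\in(\alpha-x,\beta-x)$ was arbitrary, this proves $S(C)\cap(\alpha-x,\beta-x)=\emptyset$, which is the desired conclusion.

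The argument is quite short, so the main thing to get right is the bookkeeping of the inequalities that ensure $x-t<0$; this is precisely where the hypothesis $x\leqslant \alpha/2$ enters, so the only genuine obstacle is noticing that this hypothesis is exactly what is needed to rule out the ``left neighbour'' $x-t$ of $x$ at distance $t$. Everything else is a direct unpacking of the definition of the center of distances.
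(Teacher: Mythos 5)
Your argument is correct and coincides with the paper's own proof: both take $z=x$ as the witness, use $x\leqslant\frac{\alpha}{2}$ to get $x\leqslant\alpha-x<t$ so that $x-t<0\notin C$, and note $x+t\in(\alpha,\beta)$ is excluded by hypothesis. Nothing to add.
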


\begin{proof}
Given $x\in [0, \frac{\alpha}{2}] \cap C$, consider $t \in (\alpha - x, \beta -x)$. We get \[x \leqslant \frac\alpha2 \leqslant \alpha - x < t < \beta -x. \]
Since $\alpha < x+t < \beta$, we get $x+t \notin C$, also $x<t$ implies $x-t \notin C$. Therefore, $x\in C$ implies $t\notin S(C)$.
\end{proof}

We will apply the above lemma by putting suitable $C$-gaps in place of the interval $(\alpha,\beta)$.
In order to obtain $t\notin S(C)$, we must find $x <t$ such that $x+t \in (\alpha, \beta)$ and $x\in C$.
For example, this is possible when $\frac\alpha2 < t < \alpha$ and the interval $[0,\alpha]$ includes no $C$-gap of the length greater than or equal to $\beta -\alpha$. But if such a gap exists, then we choose the required $x$ more carefully.
 
\begin{thm}\label{t12}
The center of distances of the Cantorval $\Xa$ consists of exactly the terms of the sequence $0,\frac34,\frac12,\ldots,\frac3{4^n},\frac2{4^n},\ldots$.
\end{thm}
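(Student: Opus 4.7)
The inclusion $S(\Xa) \supseteq \{0\} \cup \{3/4^n, 2/4^n : n \geq 1\}$ is immediate from Proposition \ref{I}, since these are exactly $0$ and the terms of the defining sequence of $\Xa$. For the reverse inclusion, fix $t > 0$ with $t \notin \{3/4^n, 2/4^n\}$; since $0 \in \Xa \subseteq [0, +\infty)$ one has $S(\Xa) \subseteq \Xa$, so the case $t \notin \Xa$ is handled trivially by $x = 0$, and I may assume $t \in \Xa$. The plan is then to establish the exclusion first in the top range $t \in (3/16, 5/3]$ by a direct case analysis using Lemma \ref{l7}, and then to lift the result to every smaller scale by invoking Lemma \ref{lsk} with $\lambda = 1/4$ and $b = 5/3$, supported by the self-similarity $\Xa \cap [0, 5/12] = (1/4)\Xa$ from Proposition \ref{p3}.

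For the top-level analysis I would split $(3/16, 5/3]$ into three overlapping sub-ranges. For $t \in (5/6, 5/3]$, any $x \in \Xa \cap (5/3 - t, t)$---which is nonempty because $(2/3, 1) \subseteq \Xa$ meets this interval---satisfies $x + t > 5/3$ and $x - t < 0$, so both are outside $\Xa$. For $t \in (3/4, 5/6]$, I would apply Lemma \ref{l7} to the gap $(7/6, 5/4)$ with $x = 5/12 \in \Xa \cap [0, 7/12]$, which works exactly for $t \in (3/4, 5/6)$; the single boundary value $t = 5/6$ is handled by a direct witness such as $x = 31/24 \in [5/4, 17/12] \subseteq \Xa$, yielding $x + t = 17/8 > 5/3$ and $x - t = 11/24 \in (5/12, 1/2)$, the large $\Xa$-gap. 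For the most delicate sub-range $t \in (3/16, 3/4) \setminus \{1/2\}$, I would invoke Lemma \ref{l7} repeatedly, cycling among the length-$1/12$ gap $(7/6, 5/4)$ near $t = 3/4$, the length-$1/48$ gaps $(5/48, 1/8)$, $(7/24, 5/16)$, $(29/48, 5/8)$ for mid-range $t$, and (by the symmetry $h$) their reflections, choosing $x \in \Xa \cap [0, \alpha/2]$ so that $x + t$ falls inside the targeted gap. The crucial feature is that at $t = 3/4$ and $t = 1/2$ every admissible $x$ forces $x + t \in \Xa$, so these two values cannot be excluded---and indeed must not be, since they lie in $S(\Xa)$.

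Once $S(\Xa) \cap (3/16, 5/3] = \{3/4, 1/2\}$ is secured, the self-similarity of $\Xa$ allows the whole argument to be rescaled inside each $C_n = (1/4^n)\Xa$: for $t \in (3/4^{n+1}, 5/(3 \cdot 4^{n-1})]$ one uses witnesses of the form $x/4^{n-1}$ together with the $\Xa$-gaps rescaled by $1/4^{n-1}$, and Lemma \ref{lsk} guarantees that these rescaled gap positions continue to avoid $\Xa$. Since $\bigcup_{n \geq 1} (3/4^{n+1}, 5/(3 \cdot 4^{n-1})] = (0, 5/3]$ with exceptions exactly $\{3/4^n, 2/4^n : n \geq 1\}$, this completes the converse inclusion. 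I expect the main obstacle to be the third sub-range $(3/16, 3/4) \setminus \{1/2\}$: no single $\Xa$-gap translates across the entire interval, so one must orchestrate several gap choices and verify for each $t$ that some admissible $x \in \Xa \cap [0, \alpha/2]$ really exists, while simultaneously confirming that the failure to exclude is sharp at the two protected values $3/4$ and $1/2$.
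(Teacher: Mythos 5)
Your strategy is essentially the paper's: Proposition \ref{I} gives the inclusion of the terms $\frac3{4^n},\frac2{4^n}$ into $S(\Xa)$; Lemma \ref{l7}, applied to a handful of explicit $\Xa$-gaps, eliminates everything in a top-level range except $\frac34$ and $\frac12$; and self-similarity ($\Xa\cap[0,\frac5{12}]=\frac14\Xa$, Lemma \ref{lsk}) transports the exclusion to all smaller scales. Your preliminary reduction via $0\in\Xa\Rightarrow S(\Xa)\subseteq\Xa$ is a genuine improvement over the paper's write-up: it disposes for free of every $t$ lying in an $\Xa$-gap, which is exactly where a naive application of Lemma \ref{l7} is most delicate (e.g.\ for $t\in(\frac{29}{96},\frac5{16})$ the translated interval $(\frac{29}{48}-t,\frac58-t)$ sits inside the gap $(\frac7{24},\frac5{16})$ and no admissible witness exists, but such $t$ are not in $\Xa$ so nothing need be done).

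The gap in your attempt is that the sub-range you defer, $t\in(\frac3{16},\frac34)\setminus\{\frac12\}$, is where essentially all the content of the theorem lives, and ``cycling among gaps, choosing $x$ so that $x+t$ falls inside the targeted gap'' is a statement of intent rather than a proof: for each such $t\in\Xa$ one must exhibit a gap $(\alpha,\beta)$ and a point $x\in\Xa\cap[0,\frac\alpha2]\cap(\alpha-t,\beta-t)$, and verify that $(\alpha-t,\beta-t)$ is not swallowed by a longer $\Xa$-gap lying below $\frac\alpha2$---this is precisely how $\frac34$ and $\frac12$ survive. The paper carries this out with three gaps on three abutting ranges: $(\frac76,\frac54)$ for $t\in(\frac7{12},\frac76)\setminus\{\frac34\}$ (the only obstruction in $[0,\frac7{12}]$ of length $\geqslant\frac1{12}$ is $(\frac5{12},\frac12)$, which blocks exactly $t=\frac34$), $(\frac{29}{48},\frac58)$ for $t\in(\frac{29}{96},\frac7{12}]\setminus\{\frac12\}$ (the gap $(\frac5{48},\frac18)$ blocks exactly $t=\frac12$), and $(\frac5{12},\frac12)$ for $t\in(\frac5{24},\frac{29}{96}]$; with your lower cutoff $\frac3{16}$ you would additionally need, say, $(\frac7{24},\frac5{16})$ to cover $(\frac3{16},\frac5{24}]$. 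Two smaller corrections: your justification of the witness for $t=\frac56$ is wrong as stated, since $[\frac54,\frac{17}{12}]$ is \emph{not} contained in $\Xa$ (it contains the gap $(\frac{65}{48},\frac{11}8)$), although $\frac{31}{24}=\frac54+\sum_{j\geqslant3}\frac2{4^j}$ does belong to $\Xa$ and does work; and in the rescaling step you should ensure the witnesses for $t\leqslant\frac56$ place $x+t$ in a genuine gap inside $[0,\frac53)$ rather than above $\frac53$, since only then does self-similarity give $\frac{x+t}{4^n}\notin\Xa$---the witnesses for $t>\frac56$ do not rescale, but those $t$ are already covered at the previous scale because consecutive ranges overlap.
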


\begin{proof}
We get $ \{\frac2{4^n}\colon n>0\} \cup \{\frac3{4^n}\colon n>0\}\subseteq S(\Xa)$, by Proposition \ref{I}. Also, the diameter of $\Xa$ is $\frac53$ and $\frac56 \in \Xa$, hence no $t>\frac56$ belongs to $S(\Xa)$.
We use Lemma \ref{l7} with respect to the gap $(\alpha, \beta)=(\frac76, \frac54). $ Keeping in mind the affine description of $\Xa$, we see that the set $\Xa\cap[0, \frac7{12}]$ has a gap $(\frac5{12},\frac12)$ of the length $\frac1{12}$. For $t\in(\frac7{12},\frac76)\setminus\{\frac34\}$, we choose {$x$ in $\Xa$} such that $x \in (\frac76 - t, \frac54 - t)$. So, if {$t\in(\frac7{12},\frac76)\setminus\{\frac34\}$}, then $t\notin S(\Xa)$.
Similarly using Lemma \ref{l7} with the gap $(\alpha, \beta)=(\frac{29}{48}, \frac{5}{8})$, we check that for {$t\in(\frac{29}{96},\frac7{12}]\setminus\{\frac12\}$} there exists {$x$ in $\Xa$} such that $x \in (\frac{29}{48} - t, \frac{5}{8} - t)$. Hence, if {$t\in(\frac{29}{96},\frac7{12}]\setminus\{\frac12\}$}, then $t\notin S(\Xa)$.
Analogously, using Lemma \ref{l7} with the gap $(\alpha, \beta)=(\frac{5}{12}, \frac{1}{2})$, we check that if $\frac5{24} < t \leqslant \frac{29}{96} < \frac5{12}$, then $t\notin S(\Xa)$.

So far, we have shown that numbers $\frac12$ and $\frac34$ are the only elements of $S(\Xa)\cap(\frac5{24}, +\infty)$.
For the remaining part of the interval $[0,+\infty)$ the proof proceeds by induction on $n$, since $\Xa$ and $\frac1{4^n}\cdot \Xa$ are similar.
\end{proof}

Denote $\Za = [0,\frac53] \setminus \int \Xa $. Thus the closure of a $\Xa$-gap is the $\Za$-interval and the interior of a $\Xa$-interval is the $\Za$-gap.

\begin{thm}\label{12}
The center of distances of the set $\Za$ is trivial, i.e., $S(\Za)= \{0\}$.
\end{thm}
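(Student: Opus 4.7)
The claim $S(\Za)=\{0\}$ says that for every $t>0$ some $x\in\Za$ satisfies $x-t\notin\Za$ and $x+t\notin\Za$. For $t>\frac{5}{3}$ or $t\in\int\Xa\cap(0,\frac{5}{3}]$ the point $x=0\in\Za$ already works: $x-t<0$ lies outside $[0,\frac{5}{3}]\supseteq\Za$ and $x+t$ is either $>\frac{5}{3}$ or lies in $\int\Xa$. So the genuine work is confined to $t\in\Za\cap(0,\frac{5}{3}]$.

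For $t\in(\frac{1}{3},\frac{5}{3}]$ my main tool is Lemma \ref{l7} applied to $C=\Za$; the relevant ``$(\alpha,\beta)$''-intervals are exactly the $\Za$-gaps, namely the interiors of the $\Xa$-intervals from Propositions \ref{p3}--\ref{p4}. Taking the largest gap $(\alpha,\beta)=(\frac{2}{3},1)$ and the three witnesses $x=0,\,\frac{1}{4},\,\frac{1}{3}\in\Za$ (the point $\frac{1}{3}$ is a boundary point of $\Xa$ because it sits inside the copy $h[\frac54+D]$ exactly at the self-similar image of the original $\frac13\in\Xa\cap[0,\frac{2}{3}]$) produces the three exclusion intervals $(\frac{2}{3},1)$, $(\frac{5}{12},\frac{3}{4})$, $(\frac{1}{3},\frac{2}{3})$, whose union equals $(\frac{1}{3},1)$. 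For $t\in(1,\frac{5}{3}]$ the witness $x=\frac{2}{3}\in\Za$ gives $x+t>\frac{5}{3}$ and $x-t<-\frac{1}{3}$. The two leftover isolated values are settled by explicit witnesses: for $t=\frac{1}{3}$ take $x=\frac{13}{24}\in\Za$, so that $x-t=\frac{5}{24}\in(\frac{1}{6},\frac{1}{4})\subseteq\int\Xa$ and $x+t=\frac{7}{8}\in(\frac{2}{3},1)\subseteq\int\Xa$; for $t=1$ take $x=\frac{11}{24}\in[\frac{5}{12},\frac{1}{2}]\subseteq\Za$, so that $x+t=\frac{35}{24}\in(\frac{17}{12},\frac{3}{2})\subseteq\int\Xa$ and $x-t<0$.

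The residual range $t\in(0,\frac{1}{3}]$ I handle by self-similarity. Proposition \ref{p3} gives $\Xa\cap[0,\frac{5}{3\cdot 4^n}]=\frac{1}{4^n}\Xa$ and hence $\Za\cap[0,\frac{5}{3\cdot 4^n}]=\frac{1}{4^n}\Za\subseteq\Za$; since the affine copies in Proposition \ref{p3} are separated by genuine $\Xa$-gaps, every $\Xa$-interval of $\frac{1}{4^n}\Xa$ is still an $\Xa$-interval of the full $\Xa$, so every scaled $\int\Xa$ lies inside $\int\Xa$. Picking $n\geqslant 1$ with $4^n t\in(\frac{1}{3},\frac{4}{3}]$ and rerunning the arguments above inside $\frac{1}{4^n}\Za$ produces $x\in\Za$ with $x\pm t$ either below $0$ or inside the scaled interior of an $\Xa$-interval, hence outside $\Za$. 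The main obstacle is the combinatorial bookkeeping: checking that the three Lemma \ref{l7} exclusions really union to $(\frac{1}{3},1)$, verifying the membership claims ($\frac{1}{3},\frac{11}{24},\frac{13}{24}\in\Za$ and $\frac{5}{24},\frac{7}{8},\frac{35}{24}\in\int\Xa$) from the affine description, and making sure the descent leaves no gaps. The only genuinely delicate spot is that the ``$x=\frac{2}{3}$, $x+t>\frac{5}{3}$'' trick used for $t\in(1,\frac{5}{3}]$ does not descend verbatim---at scale $4^{-n}$ the analogous inequality only pushes $x+t$ past $\frac{5}{3\cdot 4^n}$, which lands in the $\Xa$-gap just above $\frac{1}{4^n}\Xa$ and hence in $\Za$---so this subrange must be covered instead by further Lemma \ref{l7} applications with the next $\Za$-gap up.
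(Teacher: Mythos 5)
Your overall strategy (excluding $t\in\operatorname{Int}\Xa$ with the witness $x=0$, applying Lemma \ref{l7} to the $\Za$-gap $(\frac23,1)$ with $x\in\{0,\frac14,\frac13\}$ to kill $(\frac13,1)$, ad hoc witnesses for $t=\frac13$, $t=1$ and $t>1$, then descending by the self-similarity $\Za\cap[0,\frac{5}{3\cdot4^n}]=\frac1{4^n}\Za$) is essentially the paper's, and the computations you do carry out are correct ($\frac{13}{24},\frac{11}{24},\frac13\in\Za$; $\frac5{24}\in(\frac16,\frac14)$, $\frac78\in(\frac23,1)$, $\frac{35}{24}\in(\frac{17}{12},\frac32)$ all lie in $\operatorname{Int}\Xa$). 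But the ``delicate spot'' you flag at the end is a genuine hole, and the repair you propose cannot close it. Your directly excluded range that survives rescaling is only $[\frac13,1]$, whose endpoints have ratio $3<4$; so after descending you miss $t\in(\frac1{4^{n+1}},\frac1{3\cdot4^n})$ for every $n\geqslant0$, in particular $t\in(\frac14,\frac13)$. This range is not vacuous: e.g.\ the whole $\Za$-interval $[\frac7{24},\frac5{16}]$ sits inside $(\frac14,\frac13)$, so these $t$ are not disposed of by the $x=0$ trick.

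The fix cannot be ``further Lemma \ref{l7} applications'': that lemma, applied to a $\Za$-gap $(\alpha,\beta)$, only ever excludes $t$ with $\frac{\alpha}{2}<t<\beta$ (since it insists on $x\leqslant\frac{\alpha}{2}$ precisely so that $x-t<0$). Running through the $\Za$-gaps, $(\frac23,1)$ reaches only $t>\frac13$, $(\frac16,\frac14)$ only $t<\frac14$, and $(\frac{17}{12},\frac32)$ only $t>\frac{17}{24}$; no $\Za$-gap reaches into $[\frac14,\frac13]$. What is needed here is a witness $x>t$ for which \emph{both} $x+t$ and $x-t$ land in $\Za$-gaps --- exactly the ``choose $x$ more carefully'' situation mentioned after Lemma \ref{l7}. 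The paper takes $x=\frac12\in\Za$: for $t\in(\frac14,\frac13)$ one has $\frac12-t\in(\frac16,\frac14)$ and $\frac12+t\in(\frac34,\frac56)\subset(\frac23,1)$, both $\Za$-gaps, and since both images lie in $[0,1]$ this witness rescales correctly, enlarging the scalable range to $[\frac14,1]$ (ratio exactly $4$) so that the descent leaves no gaps. With that one additional witness your argument goes through.
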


\begin{proof}
If $\alpha >1$, then $\{1+\alpha, 1-\alpha\} \cap \Za = \emptyset$, hence $1\in \Za$ implies $\alpha \notin S(\Za)$.
If $\alpha \in \{\frac14, 1\}$, then $\frac{11}{24} \in (\frac5{12}, \frac12) \subset \Za$ implies $\alpha \notin S(\Za)$.
Indeed, the number $1+\frac{11}{24}$ belongs to the $\Za$-gap $(\frac{17}{12}, \frac32)$ and the number $\frac14+\frac{11}{24}$ belongs to the $\Za$-gap $(\frac{2}{3}, 1)$ and the number $\frac{11}{24} - \frac14$ belongs to the $\Za$-gap $(\frac16, \frac14)$.
Also $0\in \Za $ implies $(\frac23, 1)\cap S(\Za)=\emptyset,$	since $(\frac23, 1)$ is a $\Za$-gap.
For the same reason $\frac14 \in \Za$ implies $\frac23 \notin S(\Za)$ and $\frac13 \in \Za$ implies that no $\alpha \in (\frac13,\frac23)$ belongs to $S(\Za)$. Since $\frac16 <\frac{17}{32} -\frac13 <\frac14$ and $\frac23 <\frac{17}{32} +\frac13 <1$, then $\frac{17}{32} \in \Za$ implies $\frac13 \notin S(\Za)$. But, if $\alpha \in (\frac14, \frac13)$, then $\frac12 \in \Za$ implies $\alpha \notin S(\Za)$. Indeed, $\frac16 <\frac12 - \alpha <\frac14 $ and $\frac34 <\frac12 + \alpha <\frac56.$ So far, we have shown $S(\Za) \cap [\frac14, +\infty) =\emptyset$.
In fact, sets $[\frac1{4^{n+1}}, \frac1{4^n}]$ and $S(\Za)$ are always disjoint, since
\[\textstyle
\frac1{4^n} \cdot (\Za \cap [0,1]) = [0, \frac1{4^n}] \cap \Za.
\]
Therefore $\alpha \in [\frac1{4^{n+1}}, \frac1{4^n}] \cap S(\Za)$ implies $4^n \cdot \alpha \in [\frac1{4}, 1] \cap S(\Za)$, a contradiction. Finally, we get $S(\Za) = \{0\}$.
\end{proof}

Now, denote $\Ya=\Za \cap \Xa = \Xa \setminus \int \Xa $. Thus, each $\Xa$-gap is also the $\Ya$-gap, and the interior of an $\Xa$-interval is the $\Ya$-gap.

\begin{thm}\label{13}
$S(\Ya)= \{0\} \cup \{\frac1{4^n}\colon n\in \omega \}$, i.e., the center of distances of the set $\Ya$ consists of exactly of the terms of the sequence $0,\frac14,\frac1{16},\ldots,\frac1{4^n},\ldots$.
\end{thm}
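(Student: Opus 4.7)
The plan is to prove both inclusions separately, exploiting the central symmetry $h$ together with a subtler local symmetry of $\Ya$ about $\frac13$. The key structural observation is that $\Ya\cap[0,\frac23]$ is invariant under $x\mapsto\frac23-x$: this is not obvious because the reflection about $\frac13$ exchanges the $\Xa$-interval $[\frac16,\frac14]$ with the $\Xa$-gap $[\frac{5}{12},\frac12]$ (and, at every deeper level, an $\Xa$-interval of length $\frac1{3\cdot 4^k}$ with an $\Xa$-gap of the same length), but since both kinds of endpoints belong to $\Ya$, the swap preserves $\Ya$. Equivalently, $\Ya\cap D$ is symmetric about $\frac{1}{12}$, so even though the six $D$-copies from Proposition \ref{p4} are only affinely (not translationally) equivalent, their $\Ya$-traces $\Ya\cap(\mathrm{copy})$ are literal translates of $\Ya\cap D$. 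This makes translations by $\frac1{4^n}$ and by $1$ act as bijections of $\Ya$.

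For $\{0\}\cup\{\frac1{4^n}\colon n\in\omega\}\subseteq S(\Ya)$, the case $0$ is trivial. I first treat $1\in S(\Ya)$: since $(\frac23,1)\subseteq\int\Xa$, one splits $\Ya=(\Ya\cap[0,\frac23])\sqcup(\Ya\cap[1,\frac53])$, and translation by $1$ bijects the two halves by combining the global identity $\Ya\cap[1,\frac53]=\frac53-(\Ya\cap[0,\frac23])$ with the local identity $\Ya\cap[0,\frac23]=\frac23-(\Ya\cap[0,\frac23])$. For $\frac14\in S(\Ya)$, the six $D$-copies occupy $\{0,\frac14,\frac12,1,\frac54,\frac32\}+[0,\frac16]$; each of these six positions has a neighbor at distance $\frac14$, so any point in a given copy can be shifted by $\pm\frac14$ (with appropriate sign) into an adjacent copy and thereby stays in $\Ya$; the endpoints of the three large $\Xa$-intervals are handled by direct inspection. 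For $\frac1{4^n}$ with $n\geqslant 2$, the same combinatorics iterates inside each $D$ by the self-similarity $\Ya\cap[0,\frac5{3\cdot4^k}]=\frac1{4^k}\Ya$.

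For $S(\Ya)\subseteq\{0\}\cup\{\frac1{4^n}\colon n\in\omega\}$, first $1\in\Ya$ and $\Ya\subseteq[0,\frac53]$ force $S(\Ya)\subseteq[0,1]$. Then I apply Lemma \ref{l7} using that interiors of $\Xa$-intervals are also $\Ya$-gaps. Applied to $(\frac23,1)$ with $x\in\{0,\frac16,\frac14\}\subseteq\Ya\cap[0,\frac13]$ it excludes $(\frac{5}{12},1)$; applied to the $\Xa$-gap $(\frac{5}{12},\frac12)$ with $x\in\{\frac16,\frac14\}$ it excludes $(\frac14,\frac13)\cup(\frac16,\frac14)$; applied once more to $(\frac23,1)$ with $x\in\Ya\cap(\frac14,\frac13)$ such as $x=\frac7{24}$ or $x=\frac5{16}$ (boundary points of scaled $\Xa$-intervals inside the copy at $[\frac14,\frac{5}{12}]$) it excludes the remaining $t\in(\frac13,\frac{5}{12}]$. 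This yields $S(\Ya)\cap(\frac14,1)=\emptyset$. For the ranges $(\frac1{4^{n+1}},\frac1{4^n})$ with $n\geqslant 1$, any $\alpha\in S(\Ya)$ satisfies $\alpha\in\Ya$ (take $x=0$), hence $\alpha\in\Ya\cap[0,\frac5{3\cdot 4^{n-1}}]=\frac1{4^{n-1}}\Ya$, and rescaling the exclusion scheme by $\frac1{4^{n-1}}$ finishes the job.

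The main obstacle I expect is a careful proof of the key structural observation above---namely, that the interval-versus-gap swap under reflection about $\frac13$ persists at every level of the recursive decomposition and genuinely yields a symmetry of the entire set $\Ya$ (covering also the Cantor-like non-endpoint points, not just the rational endpoints). Once this is rigorously in place, it powers every translation by $\frac1{4^n}$ in Part 1 and, together with Lemma \ref{l7}, the self-similar rescaling in Part 2.
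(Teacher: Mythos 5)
Your overall architecture is the same as the paper's: the inclusion $\{0\}\cup\{\frac1{4^n}\colon n\in\omega\}\subseteq S(\Ya)$ rests on the translation identities $(\Ya\cap[0,\frac23])+1=\Ya\cap[1,\frac53]$ and $\frac14+(\Ya\cap[0,\frac16])\subseteq\Ya$, rescaled by self-similarity, and the reverse inclusion is done with Lemma \ref{l7} applied to $\Ya$-gaps plus the similarity $\Ya\cap[0,\frac5{3\cdot4^n}]=\frac1{4^n}\cdot\Ya$. You go beyond the paper in one useful respect: you isolate the reflection symmetry of $\Ya\cap[0,\frac23]$ about $\frac13$ (equivalently of $\Ya\cap D$ about $\frac1{12}$), coming from the level-by-level swap of $\Xa$-intervals with $\Xa$-gaps of equal length, as the reason why the six $\Ya$-traces of the copies of $D$ are literal translates of one another. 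The paper merely asserts the translation identities (Figure \ref{fig:the_correspondence_of_gaps} is the only hint), and your proposed induction on the level of the recursive decomposition is the right way to make them rigorous; this is indeed where the real work of the lower bound lies.

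The exclusion part, however, has concrete holes. First, applying Lemma \ref{l7} to the gap $(\frac5{12},\frac12)$ with $x=\frac14$ violates the hypothesis $x\leqslant\frac{\alpha}{2}=\frac5{24}$, and the conclusion genuinely fails for that witness: for $t=\frac5{24}$ one has $x-t=\frac1{24}=\frac14\cdot\frac16\in\Ya$ (it is the left endpoint of the $\Xa$-interval $[\frac1{24},\frac1{16}]$), so $x=\frac14$ does not exclude $t$. The interval $(\frac16,\frac14)$ is in any case a $\Ya$-gap itself, so $x=0$ disposes of it in one line. Second, the witnesses $\frac7{24}$ and $\frac5{16}$ against the gap $(\frac23,1)$ only exclude $(\frac{17}{48},\frac{17}{24})$, leaving $(\frac13,\frac{17}{48}]$ untouched; taking $x=\frac13\in\Ya$ instead excludes all of $(\frac13,\frac23)$ at once. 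Third, and most importantly, the point $t=\frac13$ is never excluded by your scheme: every application of Lemma \ref{l7} yields an open interval avoiding $\frac13$, and the gap $(\frac23,1)$ can never reach $\frac13$ since that would require a witness $x>\frac13=\frac{\alpha}{2}$. The paper's proof (via Theorem \ref{12}) handles precisely this point with the separate witness $\frac{17}{32}\in\Ya$, for which $\frac{17}{32}-\frac13=\frac{19}{96}\in(\frac16,\frac14)$ and $\frac{17}{32}+\frac13=\frac{83}{96}\in(\frac23,1)$ both fall into $\Ya$-gaps. With these three repairs (and the rescaling step, which both you and the paper leave at the level of a sketch) your argument closes.
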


\begin{proof}
Since the numbers $0, \frac14, \frac13(=\sum_{n>0} \frac5{4^{2n}}), \frac12, \frac{17}{32}$ and $1 $ are in $\Ya$, we get
\[\textstyle
\bigcup \{(\frac1{4^{n+1}}, \frac1{4^n})\colon n \in \omega \} \cap S(\Ya) = \emptyset,
\]
as in the proof of Theorem \ref{12}.
We see that $1\in S(\Ya)$, because
\[\textstyle
(\Ya \cap [0,\frac23]) + 1 = \Ya \cap [1, \frac53].
\]
Moreover
\[\textstyle
(\Ya \cap [0,\frac16] +\frac14) \cup ( \Ya \cap [0, \frac16] + \frac12) \subset \Ya,
\] 
so $\frac14 \in S(\Ya)$. Similarly, we check that $\frac1{4^n} \in S(\Ya)$.
\end{proof}

\begin{cor}\label{c14}
Neither $\Za$ nor $\Ya$ is the set of subsums of a sequence.
\end{cor}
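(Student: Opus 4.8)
The common engine for both assertions is Proposition \ref{I} together with the elementary remark from the start of Section 3 that a set of subsums $X$ with $0\in X\subseteq[0,+\infty)$ satisfies $S(X)\subseteq X$. Concretely, if a set $Z$ is the set of subsums of a sequence $\{a_n\}_{n\in\omega}$, then Proposition \ref{I} forces $a_n\in S(Z)$ for every $n$; so once $S(Z)$ has been computed, the admissible generating terms are severely restricted, and the plan is to show that no admissible sequence reproduces $Z$.

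The case of $\Za$ is the model situation and is settled outright. Assume $\Za$ is the set of subsums of $\{a_n\}$. By Theorem \ref{12}, $S(\Za)=\{0\}$, so Proposition \ref{I} gives $a_n=0$ for all $n$; but then the only subsum is $0$, forcing $\Za=\{0\}$, which is false since, for instance, $1\in\Za$. Hence $\Za$ is not a set of subsums.

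For $\Ya$ the same opening move applies, but the real work only begins there. By Theorem \ref{13} we have $S(\Ya)=\{0\}\cup\{\frac1{4^n}\colon n\in\omega\}$, so after discarding the harmless zero terms every generator of a hypothetical representation is a power of $\frac14$. I would then try to reconstruct the generating data from the gaps of $\Ya$. The largest $\Ya$-gap is the interior $(\frac23,1)$ of the $\Xa$-interval $[\frac23,1]$, of length $\frac13$; since a set of subsums can have a gap of length $\ell$ only if it has a generating term of size at least $\ell$, and the only power of $\frac14$ that is $\geqslant\frac13$ is $1$, there must be a generator equal to $1$ (and only one, since two copies would produce the subsum $2>\frac53=\max\Ya$). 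Consequently the remaining generators, all $\leqslant\frac14$, sum to $\frac23$, and $\Ya\cap[0,\frac23]$ must itself be their set of subsums. Using the self-similarity $\frac1{4^n}\cdot\Xa=\Xa\cap[0,\frac{5}{3\cdot4^n}]$ one is then faced, one scale down, with a question of exactly the same shape, and the strategy is to iterate this reduction, confronting the pattern forced on a powers-of-$\frac14$ subsum set with the actual gap-and-endpoint structure of $\Ya$.

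The main obstacle is precisely this confrontation, and I expect it to be genuinely delicate. The gap lengths of $\Ya$ and their multiplicities (one gap of length $\frac13$, two of length $\frac1{12}$, six of length $\frac1{48}$, and so on, as recorded after Figure \ref{fig:an_approximation_of_the_Cantorval}) are exactly the kind of data that repeated powers of $\frac14$ generate, so no contradiction can be read off from the coarse gap statistics; it must be extracted at the level of individual points. The most promising route seems to me the digital description of the Cantorval: a subsum of powers of $\frac14$ has a base-$4$ expansion with digits in $\{0,1,2\}$, whereas the points of $\Xa$ are those with digits in $\{0,2,3,5\}$ (after the carries that the digit $5$ forces). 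The decisive step would be to separate these two descriptions on the boundary, namely to produce either a boundary point of $\Xa$ in $[0,\frac23]$ admitting no digit-$\{0,1,2\}$ expansion, or a digit-$\{0,1,2\}$ number that falls in the interior of some $\Xa$-interval and hence cannot belong to $\Ya=\Xa\setminus\int\Xa$. Locating such a point, and thereby prying $\Ya$ apart from every powers-of-$\frac14$ subsum set, is the hard part and the place where I would concentrate all the care.
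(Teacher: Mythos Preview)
Your treatment of $\Za$ is correct and is exactly the paper's argument.

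For $\Ya$ you walk past a one-line finish and instead sketch an elaborate, explicitly unfinished programme. The paper's proof is simply: $\tfrac53\in\Ya$, yet $\sum_{n\in\omega}\tfrac1{4^n}=\tfrac43$. Since Proposition~\ref{I} forces every generator $a_n$ into $S(\Ya)=\{0\}\cup\{4^{-n}\colon n\in\omega\}$ and the full sum $\sum_n a_n$ must equal $\max\Ya=\tfrac53$, this is already a contradiction. None of your gap bookkeeping or base-$4$ digit comparison is needed, and what you wrote for $\Ya$ is a plan rather than a proof.

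One caveat is worth recording. The inequality $\sum_n a_n\leqslant\tfrac43$ tacitly treats the $a_n$ as pairwise distinct, so that $\sum_n a_n\leqslant\sum_{v\in S(\Ya)}v$; your aside that two copies of $1$ would force the illegal subsum $2$ shows you sensed this point. If repeated terms are permitted the conclusion for $\Ya$ in fact fails: $\Ya$ is then the set of subsums of the sequence $1,\tfrac14,\tfrac14,\tfrac1{16},\tfrac1{16},\ldots$, because $\Ya\cap[0,\tfrac23]$ coincides with the self-similar Cantor set $\{\sum_{k\geqslant1}c_k4^{-k}\colon c_k\in\{0,1,2\}\}$ (both are the attractor of the maps $x\mapsto\tfrac{x}{4}$, $x\mapsto\tfrac5{12}-\tfrac{x}{4}$, $x\mapsto\tfrac12+\tfrac{x}{4}$), and symmetry about $\tfrac56$ gives $\Ya\cap[1,\tfrac53]=1+(\Ya\cap[0,\tfrac23])$. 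So your longer route could not have reached a contradiction in that generality either. Under the intended distinct-terms reading, the paper's short argument is complete and yours is superfluous.
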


\begin{proof}
Because $S(\Za)=\{0\}$, thus Proposition \ref{I} decides the case with $\Za$. Also, this proposition decides the cases with $\Ya$, since $\frac53 \in \Ya$ and $\sum_{n\in \omega}\frac1{4^n}=\frac43$.
\end{proof}

Let us add that the set of subsums of the sequence $\{\frac1{4^n}_{n \in \omega}$ is included in $\Ya$. One can check this, observing that each number $\sum_{n\in A}\frac1{4^n}$, where the nonempty set $A\subset \omega$ is finite, is the right end of an $\Xa$-interval.

\section{Digital representation of points in the Cantorval $\Xa$}

Assume that $A=\{a_n\}_{ n>0}$ and $B=\{b_n\}_{ n>0}$ are digital representations of a point $x\in \Xa, $ i.e., \[ \textstyle \sum_{n>0}\frac{a_n}{4^n} = \sum_{n>0}\frac{b_n}{4^n}=x ,\] where $a_n, b_n \in \{0,2,3,5\}.$ 
We are going to describe dependencies between $ a_n$ and $ b_n $.
Suppose $n_0$ is the least index such that $a_{n_0}\not=b_{n_0}$.
Without loss of generality, we can assume that $a_{n_0}=2 <b_{n_0}=3$, bearing in mind that $\Xe(n_0)=\frac53\cdot\frac1{4^{n_0}}$.
And then we say that $A$ is \textit{chasing} $B$ (or $B$ is being \textit{caught} by $A$) in the $n_0$-step: in other words, $\sum_{i >n_0}\frac{a_i}{4^{i}} = \sum_{i >n_0}\frac{b_i}{4^{i}}+ \frac1{4^{n_0}}$ and $a_k=b_k$ for $k<n_0$.
If it is never the case that $a_k=5$ and $b_k=0$, then it has to be $b_k+3=a_k$ for all $k>n_0$.
In such a case, we obtain $ \sum_{i >n_0}\frac{a_i}{4^{i}} = \sum_{i >n_0}\frac{b_i}{4^{i}}+ \frac1{4^{n_0}}, $ since $ \sum_{i >n_0}\frac{3}{4^{i}} = \frac1{4^{n_0}}$.

Suppose $n_1$ is the least index such that $a_{n_1}=5$ and $b_{n_1}=0, $ thus $B$ is chasing $A$ in the $n_1$-step. Proceeding this way, we obtain an increasing (finite or infinite) sequence $n_0 < n_1 < \ldots$ such that $|a_{n_k} - b_{n_k}| = 5$ and $|a_i -b_i| =3$ for $n_0 < i \notin \{n_1, n_2 , \ldots \}$. Moreover, $A$ starts chasing $B$ in the $n_k$-step for even $k$'s and $B$ starts chasing $A$ in the $n_k$-step for odd $k$'s, for the rest of steps changes of chasing do not occur.

\begin{thm}\label{t11}
Assume that $x\in \Xa$ has more than one digital representation. There exists the finite or infinite sequence of positive natural numbers $n_0 < n_1 < \ldots$ and exactly two digital representations $\{a_n\}_{n>0}$ and $\{b_n\}_{n>0}$ of $x$ such that:
\begin{itemize}
\item $a_k =b_k$, 
as far as $0<k<n_0$;
\item $a_{n_0}= 2$ and $b_{n_0}=3$;
\item $a_{n_k}= 5$ and $b_{n_k}=0$, for odd $k$;
\item $a_{n_k}= 0$ and $b_{n_k}=5$, for even $k>0$;
\item $a_i \in \{3,5\}$ and $a_i - b_i =3$, as far as $n_{2k} < i < n_{2k+1}$; 
\item $a_i \in \{0,2\}$ and $b_i - a_i =3$, as far as $n_{2k+1} < i < n_{2k+2}$.
\end{itemize}
\end{thm}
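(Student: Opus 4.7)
The plan is to exploit the bound $\sum_{j\geqslant 1} 5/4^j = 5/3$ on tails of series whose digits lie in $\{0,2,3,5\}$, which forces tight constraints on how two digital representations of the same point can differ. First I show that at the smallest index $n_0$ where the two representations $\{a_n\}$ and $\{b_n\}$ disagree, one has $a_{n_0}=2$ and $b_{n_0}=3$ (up to swapping their roles). Since $\sum a_n/4^n=\sum b_n/4^n$ and the sequences agree for $n<n_0$, we get
\[
\frac{b_{n_0}-a_{n_0}}{4^{n_0}} = \sum_{i>n_0}\frac{a_i-b_i}{4^i},
\]
and the right-hand side has absolute value at most $5/(3\cdot 4^{n_0})$, so $|b_{n_0}-a_{n_0}|\leqslant 5/3$; the only such pair from $\{0,2,3,5\}$ with positive difference is $(2,3)$.

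Next I set up the recursion for the succeeding digits. Multiplying the displayed relation by $4^{n_0}$ gives $\sum_{j\geqslant 1}(a_{n_0+j}-b_{n_0+j})/4^j=1$. I claim by induction on the step index that whenever the remaining target equals $+1$ (the regime in which $A$ is chasing $B$), the next difference $d=a_{n_0+j}-b_{n_0+j}$ lies in $\{3,5\}$, and whenever the target equals $-1$, it lies in $\{-3,-5\}$. Indeed, writing target $=d+(\text{tail})/4$ with tail of magnitude at most $5/3$, from target $+1$ we get $d\in[4-5/3,4+5/3]$, and the admissible differences $\{0,\pm 1,\pm 2,\pm 3,\pm 5\}$ meet this window only in $\{3,5\}$; the case $d=3$ preserves the target, while $d=5$ flips it to $-1$, and the only admissible pair realizing $d=5$ is $(5,0)$. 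The reverse regime is symmetric.

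I then label $n_1<n_2<\ldots$ to be the successive indices $i>n_0$ at which $|d_i|=5$, i.e.\ at which the chase direction reverses. Between $n_{2k}$ and $n_{2k+1}$ the target is $+1$ and $d_i=3$, so $(a_i,b_i)\in\{(3,0),(5,2)\}$, giving $a_i\in\{3,5\}$ and $a_i-b_i=3$; at $n_{2k+1}$ the reversing pair is $(5,0)$. Between $n_{2k+1}$ and $n_{2k+2}$ the target is $-1$, yielding $(a_i,b_i)\in\{(0,3),(2,5)\}$ with $a_i\in\{0,2\}$ and $b_i-a_i=3$, and reversal pair $(0,5)$ at $n_{2k+2}$. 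This reproduces every bullet of the statement.

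Finally, to verify the ``exactly two'' assertion, suppose for contradiction that three distinct representations $A,B,C$ of $x$ exist, and let $m$ be the smallest index at which they are not all equal. The first-step argument applied to each pair that disagrees at $m$ shows that any disagreeing digits at position $m$ must form the pair $(2,3)$, so only two digit values appear there; WLOG $a_m=b_m=2$ and $c_m=3$. The chase analyses of $(A,C)$ and $(B,C)$ then begin at position $m$ with the same target $+1$ and evolve identically as long as $a_i=b_i$. At the first $n>m$ with $a_n\neq b_n$, the differences $a_n-c_n$ and $b_n-c_n$ both lie in $\{3,5\}$ (or both in $\{-3,-5\}$ after a synchronized reversal), forcing $|a_n-b_n|\in\{0,2\}$; but the first step applied to $(A,B)$ demands $|a_n-b_n|=1$, a contradiction. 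The main obstacle will be this final synchronization argument; the inductive core follows cleanly from the $5/3$ tail bound.
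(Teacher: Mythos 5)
Your proof is correct and follows essentially the same route as the paper: the chasing algorithm driven by the tail bound $\sum_{j>n}\frac{5}{4^j}=\frac53\cdot\frac1{4^n}$, which forces the first disagreement to be the pair $(2,3)$ and each subsequent admissible difference to lie in $\{3,5\}$ or $\{-3,-5\}$ according to the current chasing direction, with $\pm5$ marking the reversals $n_1<n_2<\ldots$. Your write-up is in fact more complete than the paper's two-sentence proof, notably the explicit three-representation synchronization argument for the ``exactly two'' claim, which the paper leaves implicit in the determinism of the algorithm.
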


\begin{proof}
According to the chasing algorithm described above in step $n_k $ the roles of chasing are reversed. But, if the chasing algorithm does not start, then the considered point has a unique digital representation.
\end{proof}

The above theorem makes it easy to check the uniqueness of digital representation.
For example, if $x \in \Xa$ has a digital representation $\{x_n\}_{n>0}$ such that $x_n=2$ and $x_{n+1}=3$ for infinitely many $n$, then this representation is unique.
Indeed, suppose $A=\{a_n\}_{n>0}$ and $B=\{b_n\}_{n>0}$ are two different digital representations of a point $x\in \Xa $ such that $a_k =b_k$, as far as $0 <k<n_0$ and $a_{n_0}= 2 < b_{n_0}=3$.
By Theorem \ref{t11}, never the digit 3 occurs immediately after the digit 2 in digital representations of $x$ for digits greater than $n_0$, since it has to be $|a_k -b_k| >2$ for $k>n_0$.

The map $\{a_n\}_{n>0}\mapsto\sum_{i>0}\frac{a_i}{4^{i}}$ is a continuous function from the Cantor set (a homeomorphic copy of the Cantor ternary set) onto the Cantorval such that the preimage of a point has at most two points.
In fact, the collection of points with two-point preimages and its complement are both of the cardinality continuum.
By the algorithm described above, each sequence $n_0 < n_1 < \ldots $ of positive natural numbers determines exactly two sequences $\{a_n\}_{n>0}$ and $\{b_n\}_{n>0}$ such that \[ \textstyle \sum_{n>0}\frac{a_n}{4^n} = \sum_{n>0}\frac{b_n}{4^n},\] and vice versa.
In the following theorem, we will use the abbreviation $B=\{\frac2{4^n}\colon n>0\}\cup\{\frac3{4^n}\colon n>0\}$. 

\begin{thm}\label{14}
Let $A\subset B$ be such that $B\setminus A$ and $A$ are infinite.
Then the set of subsums of a sequence consisting of different elements of $A$ is homeomorphic to the Cantor set.
\end{thm}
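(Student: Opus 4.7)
The plan is to verify that the set $E$ of subsums of $A$ is nonempty, compact, metric, perfect, and totally disconnected, i.e., a Cantor space. Metrizability is automatic, nonemptiness is clear ($0\in E$ is the empty sum), and compactness follows because $E$ is the continuous image of the compact Cantor space $\{0,1\}^A$ under the summation map $\epsilon\mapsto\sum_{a\in A,\,\epsilon_a=1}a$; the series converges since $\sum_{a\in A}a\le\sum_{b\in B}b=\tfrac{5}{3}$. Perfectness uses that $A$ is infinite: for any $x\in E$ and any arbitrarily small $a\in A$, toggling the bit $\epsilon_a$ in some representation of $x$ produces a point of $E$ arbitrarily close to but distinct from $x$.

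The crux is to show that $E$ has empty interior in $\mathbb{R}$; combined with compactness this yields total disconnectedness. Since $E\subseteq\Xa$, it is enough to prove that $\Xa\setminus E$ is dense in $\Xa$, because then no open real interval can sit inside $E$. I would prove density by an explicit perturbation. Fix $x\in\Xa$ with some digital representation $(x_n)$ and $\varepsilon>0$. Because $B\setminus A$ is infinite it contains elements of arbitrarily small magnitude; choose $\tfrac{c_0}{4^{n_0}}\in B\setminus A$ with $n_0$ so large that $\tfrac{5}{3\cdot 4^{n_0-1}}<\varepsilon$. Define $y\in\Xa$ by $y_n=x_n$ for $n<n_0$, $y_{n_0}=c_0$, and, for $n>n_0$, by the alternating pattern $y_n=2,3,2,3,\ldots$ if $c_0=2$ or $y_n=3,2,3,2,\ldots$ if $c_0=3$. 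Then $|y-x|\le\sum_{n\ge n_0}5/4^n<\varepsilon$, and the pair $(2,3)$ appears at infinitely many consecutive positions of $(y_n)$. By the observation following Theorem~\ref{t11}, $(y_n)$ is therefore the unique digital representation of $y$; since this representation uses the digit $c_0$ at position $n_0$, it requires the element $\tfrac{c_0}{4^{n_0}}\in B\setminus A$ as a summand, so $y\notin E$.

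The step I expect to be the main obstacle is the careful verification that no chasing in the sense of Theorem~\ref{t11} can furnish a second representation of $y$ that would avoid $\tfrac{c_0}{4^{n_0}}$. Within the alternating tail, any attempted chasing would require switching digits $0$ or $5$ at some $n_k$, but only digits $2$ and $3$ occur past $n_0$; and a chasing rooted in the unchanged prefix cannot cross the boundary at $n_0$ coherently, for the same digit-parity reason. Once these nonstarter cases are ruled out, the ``$2$ adjacent to $3$'' uniqueness criterion from the paragraph after Theorem~\ref{t11} applies directly, and the density of $\Xa\setminus E$ in $\Xa$ follows.
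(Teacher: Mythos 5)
Your proof is correct and follows essentially the same route as the paper: both show that the set of subsums of $A$ is closed, perfect and nowhere dense by perturbing a given digital representation into one that Theorem \ref{t11} certifies as unique and that forces a summand from $B\setminus A$, then invoke the topological characterization of the Cantor set. The only difference is cosmetic --- you force uniqueness with an infinite alternating tail $2,3,2,3,\ldots$ and the explicit ``$2$ immediately followed by $3$ infinitely often'' criterion stated after Theorem \ref{t11}, whereas the paper uses the finite pattern $b_j=5$, $b_{j+1}=2$ with zeros elsewhere --- and you are somewhat more explicit about verifying compactness, perfectness and total disconnectedness.
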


\begin{proof}
Fix a non-empty open interval $\Ia$.
Assume that $\{a_n\}_{n>0}$ is the digital representation of a point  $\sum_{n>0}\frac{a_{n}}{4^n} \in \Ia$.
Choose natural numbers $m>k$ such that numbers $\sum_{n=1}^k \frac{a_{n}}{4^n}$ and $\frac5{4^m} +\sum_{n=1}^k \frac{a_{n}}{4^n}$ belong to $\Ia$.
Then choose	 $j>m$ so that $\frac{a}{4^{j}} \in B \setminus A$, where $a =2$ or $a=3$.
Finally put
$b_{n} = a_{n}$,  for $ 0<n \leqslant k$; $b_{j} = 5$; $b_{j+1}=2$ and $b_{i} = 0$ for other cases.
Since $b_m=0$, we get  $\sum_{i>0}\frac{b_{i}}{4^i} \in \Ia$.
Theorem \ref{t11} together with conditions $b_j=5$ and $b_{j+1}=2$ imply that the point  $\sum_{i>0}\frac{b_i}{4^i}$ is not in the set of subsums of $A$.
Thus, this set being dense in itself and closed is homeomorphic to the Cantor set.
\end{proof}

\end{document}